\newtheorem{Atheorem}{Theorem}
\newtheorem{Alemma}[Atheorem]{Lemma}
\newtheorem{Btheorem}{Theorem}
\newtheorem{Bproposition}[Btheorem]{Proposition}
\newtheorem{Bremark}[Btheorem]{Remark}
\newtheorem{thm}{Theorem}[section]
\newtheorem{cor}[thm]{Corollary}
\newtheorem{lem}[thm]{Lemma}
\newtheorem{prop}[thm]{Proposition}
\newtheorem{remark}[thm]{Remark}
\newtheorem{rk}[thm]{Remark}
\theoremstyle{definition}
\theoremstyle{remark}
\numberwithin{equation}{section}
\newcommand{\R}{{\mathbb R}}
\newcommand{\Z}{{\mathbb Z}}
\newcommand{\M}{{\mathcal M}}
\newcommand{\bs}{\begin{split}}
\newcommand{\es}{\end{split}}
\newcommand{\be}{\begin{eqnarray*}}
\newcommand{\ee}{\end{eqnarray*}}
\newcommand{\beq}{\begin{align}}
\newcommand{\eeq}{\end{align}}
\def\Q{\mathcal{Q}}
\def\1{\mathbf{1}}
\begin{document}

%-------------------------------------------------------------------------
% editorial commands: to be inserted by the editorial office
%
%\firstpage{1} \volume{228} \Copyrightyear{2004} \DOI{003-0001}
%
%
%\seriesextra{Just an add-on}
%\seriesextraline{This is the Concrete Title of this Book\br H.E. R and S.T.C. W, Eds.}
%
% for journals:
%
%\firstpage{1}
%\issuenumber{1}
%\Volumeandyear{1 (2004)}
%\Copyrightyear{2004}
%\DOI{003-xxxx-y}
%\Signet
%\commby{inhouse}
%\submitted{Nov. 17, 2018}
%\received{March 16, 2000}
%\revised{March 27, 2019}
%\accepted{July 22, 2000}
%
%
%
%---------------------------------------------------------------------------
%Insert here the title, affiliations and abstract:
\setcounter{page}{1}
\title[Noncommutative differential transforms]
{Noncommutative differential transforms for averaging operators}

%----------Author 1
%\author[G. Hong]{Guixiang Hong}
%\address{
%School of Mathematics and Statistics\\
%Wuhan University\\
%Wuhan 430072\\
%China}

%\email{guixiang.hong@whu.edu.cn}
%----------Author 2
\author[B. Xu]{Bang Xu}
%\address{
%Department of Mathematical Sciences\\
%Seoul National University\\
%Seoul 08826\\
%Republic of Korea\\}

\address{School of Mathematics and Statistics, Wuhan University,  Wuhan 430072, China; and
	Department of Mathematical Sciences, Seoul National University, Seoul 08826, Republic of Korea}

\email{bangxu@whu.edu.cn}

%\thanks{This work
% was partially supported by Natural Science Foundation of China (Grant: 12071355)}

\subjclass[2010]{Primary 46L52; Secondary 42B20, 46L53}
\keywords{Calder{\'o}n-Zygmund decomposition,  Noncommutative $L_{p}$-spaces, Differential transforms, Noncommutative martingales, Weak $(1,1)$}

\date{January 22, 2022.
%\newline \indent $^{*}$Corresponding author
}
%----------additions
%\dedicatory{To my boss}
%%%
\begin{abstract}
In this paper, we complete the study of mapping properties for a family of operators evaluating the difference between differentiation operators and conditional expectations acting on noncommutative $L_{p}$-spaces.
To be more precise, we establish the weak type $(1,1)$ and $(L_{\infty},\mathrm{BMO})$ estimates of this difference. Consequently, in conjunction with interpolation and duality, we obtain all strong type $(p,p)$ estimates. This allows us to obtain a quick application to noncommutative differential transforms for averaging operators.
%The main difficulties lie in the fact that the kernel associated with considering operator does not enjoy any regularity, while the necessary regularity assumption is required to prove such endpoint estimates for the operator-valued Calder\'on-Zygmund singular integrals. %Moreover,
%the almost orthogonality principle used in \cite{HX} is no longer applicable in the present case.
%while the Lipschitz regularity condition (or necessary smoothness condition) plays an important role in estimating the weak type $(1,1)$ of noncommutative Calder\'on-Zygmund singular integrals.
\end{abstract}

%%% ----------------------------------------------------------------------
\maketitle
%%% ----------------------------------------------------------------------
%\tableofcontents

\section{Introduction}
In the scalar-valued case, Jones et al (see \cite{JRW03,HM1}) investigated the mapping properties of the following square function
\begin{align}\label{square}
\big(\sum_{k}|(M_{k}-\mathsf{E}_{k})f(x)|^{2}\big)^{\frac{1}{2}}
\end{align}
defined for a reasonable function $f$. Given $k\in\Z$, $M_{k}$ denotes the Hardy-Littlewood averaging operator
$$M_k f(x)= \frac{1}{|B_k|} \int_{B_k}f(x+y)dy= \frac{1}{|B_k|} \int_{\R^d}f(y) \1_{B_k}(x-y)~dy \quad x\in\R^d,$$
where $B_k$ is the open ball centered at the origin $0$ with radius $r(B_k)$ equals to $2^{-k}$.
$\mathsf{E}_{k}$ denotes the $k$-th conditional expectation with respect to the $\sigma$-algebra  generated by the standard dyadic cubes whose side-length equal to $2^{-k}$. It is worth pointing out that such square function plays a crucial role in deducing the variational inequalities for ergodic averages. The variational inequalities are much stronger than the maximal inequalities and thus have been widely explored in ergodic theory, probability and harmonic analysis. For topics on variational theory, we refer the reader to, for instance, \cite{Bou89,CJRW00,HM1,CJRW03,JKRW98,JR02,JRW03,JSW08,JoWa04,LeXu2,Mas,MaTo12,MaTo,OSTTW12}.

In \cite{HX}, the author and his collaborator extended the mapping properties of the square function defined as (\ref{square}) to the operator-valued setting. More precisely, let $\M$ be a
semifinite von Neumann algebra equipped with a normal semifinite
faithful (abbrieviated as \emph{n.s.f}) trace $\tau$ and $\mathcal N=L_{\infty}(\R^{d})\overline{\otimes}\M$ be the tensor von Neumann algebra with the \emph{n.s.f} tensor trace $\varphi=\int dx\otimes \tau$, where $dx$ is the Lebesgue measure. For $0<p\leq\infty$, we write $L_p(\M)$ and $L_p(\mathcal N)$ for the noncommutative $L_p$-spaces associated to the pairs $(\M,\tau)$ and $(\mathcal N,\varphi)$.
Note that for $0<p<\infty$, the second noncommutative $L_p$-space $L_p(\mathcal N)$ can be identified as the Bochner $L_{p}$-space
$L_{p}(\R^{d};L_p(\M))$.
%We refer the readers to Section 2 for a more precise definition of noncommutative $L_{p}$-spaces. 
Our previous work \cite{HX} established the weak type $(1,1)$, strong type $(p,p)$ ($1<p<\infty$) and  ($L_{\infty},\mathrm{BMO}$) bounds for the square function operator defined in (\ref{square}) when acting on operator-valued functions. Equivalently, %To deal with the weak type $(1,1)$ and strong type $(p,p)$ estimates, 
by noncommutative Khintchine inequalities in $L_{1,\infty}$ \cite{C1} and in $L_{p}$ \cite{LG}, 
we were reduced to showing the weak type $(1,1)$ and strong type $(p,p)$ estimates of the following linear operator
\begin{align}\label{finite1}
{L}f(x)=\sum_{k}\varepsilon_{k}(M_{k}-\mathsf{E}_k)f(x),
\end{align}
where $(\varepsilon_{k})$ is a Rademacher sequence on a probability space $(\Omega,P)$. %More precisely, we showed that $L$ defined as (\ref{finite1}) was bounded from $L_{1}(\mathcal{N})$ to $L_{1,\infty}(L_{\infty}(\Omega)\overline{\otimes}\mathcal{N})$. The proof was based on a kind of noncommutative Calder\'on-Zygmund decomposition found by Parcet \cite{JP1}.
%Moreover, the almost orthogonality principle (see, e.g. \cite[Lemma 3.13]{HX}) played a crucial role in estimating both good and bad functions appeared in the noncommutative Calder\'on-Zygmund decomposition. %We also refer the reader to \cite{C,HLMP,MP,HLX} for more related results on noncommutative Calder\'on-Zygmund decomposition and weak type $(1,1)$ estimates.

\medskip

%The aim of this paper is to generalize the previous work stated in \cite{HX}. 
Motivated by the boundedness theory of $L$ defined as (\ref{finite1}), we would like to investigate the mapping properties of the following
linear operator
\begin{align}\label{1}
Tf(x)=\sum_{k}\nu_{k}(M_{k}-\mathsf{E}_k)f(x),
\end{align}
whenever the sequence $(\nu_{k})\in\ell_{\infty}$ and $f\in C_{c}^{\infty}(\R^{d})\otimes S_{\M}$, where $S_{\M}$ is the subset of $\M$ with $\tau$-finite support.

We will establish the following result and we refer the reader to Section 2 (resp. Section 4) for the definition of weak $L_{p}$-spaces $L_{p,\infty}(\mathcal N)$ (resp. $\mathrm{BMO}_{d}(\mathcal N)$ spaces).
\begin{thm}\label{t5}\rm
Let $1\leq p\leq\infty$ and $T$ be defined as (\ref{1}). Let $\nu=(\nu_{k})$ be any sequence such that $\|\nu\|_{\ell_{\infty}}\leq1$. Then the following assertions hold with a positive constant $C_{p,d}$ depending on $p$ and the dimension
$d$:
\begin{itemize}
\item[(i)] for $p=1$, $$\|Tf\|_{L_{1,\infty}(\mathcal N)}\leq C_{p,d}\|f\|_{1},\; \forall f\in L_{1}(\mathcal N);$$

\item[(ii)] for $1<p<\infty$, $$\| T \! f\|_{p}\leq C_{p,d}\|f\|_{p},\; \forall f\in L_{p}(\mathcal N);$$

\item[(iii)] for $p=\infty$, $$\|
T f \|_{\mathrm{BMO}_{d}(\mathcal N)}\leq C_{p,d} \,
\|f\|_\infty,\; \forall f\in L_{\infty}(\mathcal N).$$
\end{itemize}
\end{thm}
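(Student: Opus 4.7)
The overall plan is to prove (i) the weak-type $(1,1)$ bound and (iii) the $L_\infty\to\mathrm{BMO}_d$ bound directly; assertion (ii) then drops out by noncommutative interpolation between $L_{1,\infty}(\mathcal N)$ and $\mathrm{BMO}_d(\mathcal N)$ in the spirit of Musat and Junge--Mei, so the intermediate range $1<p<\infty$ needs no separate argument. As an auxiliary input at $p=2$, I would first record that $T$ is bounded on $L_2(\mathcal N)$ uniformly in $\nu\in B_{\ell_\infty}$: this reduces, by Fubini, to the classical scalar statement that $\sum_k \nu_k (M_k-\mathsf{E}_k)$ is bounded on $L_2(\R^d)$, which in turn follows from Cotlar--Stein-type near-orthogonality of the kernels $K_k(x,y)=|B_k|^{-1}\mathbf 1_{B_k}(x-y)-|Q_k(x)|^{-1}\mathbf 1_{Q_k(x)}(y)$ at distinct scales.

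For (iii), I would use the dyadic characterization of $\mathrm{BMO}_d(\mathcal N)$: for each dyadic cube $Q$ of side-length $2^{-j_Q}$ one needs to control the average over $Q$ of $|Tf(x)-a_Q|^2$ (in the row and column senses) for a well-chosen $a_Q\in\M$. I would split $f=f_1+f_2$ with $f_1=f\mathbf 1_{3Q}$. The local piece $f_1$ is handled by the $L_2$-boundedness recorded above. For the far piece $f_2$, I would split the $k$-sum at the threshold $2^{-k}=\ell(Q)$: the contribution from scales $k>j_Q$ is controlled because $(M_k-\mathsf{E}_k)f_2$ is supported in an $O(2^{-k})$ neighbourhood of $\partial Q_k(\cdot)\cup \partial(\cdot+B_k)$ (hence essentially away from $Q$), while for scales $k\le j_Q$ the exterior cancellation between the ball and the dyadic-cube average, once tested against $f_2$ which lives on $(3Q)^c$, gives a gain of the form $2^{-k}/\mathrm{dist}(\cdot,Q)$ that sums in $k$ and is integrable in $f$.

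For (i), I would apply the noncommutative Calder\'on--Zygmund decomposition to $f$ at height $\lambda>0$, writing $f=g+b=g+\sum_R b_R$ with $b_R$ supported in a dyadic cube $R$ of side-length $2^{-j_R}$ and with $\tau$-valued mean zero. The good part is absorbed by Chebyshev and the $L_2$-bound. For the bad part, after removing the standard enlarged bad set $\bigcup_R 2R$, one is reduced to controlling $\sum_R \int_{(2R)^c}|Tb_R|\,dx$ in the operator-valued sense. Split $Tb_R=\sum_k\nu_k(M_k-\mathsf{E}_k)b_R$ at the critical index $k=j_R$: for $k\le j_R$, the mean-zero property of $b_R$ combined with the quantitative Lipschitz-in-$y$ estimate on $K_k(x,y)$ away from boundary layers yields the expected decay $2^{k-j_R}\|b_R\|_1$; for $k>j_R$, the relevant kernel is supported in an $O(2^{-k})$-collar of $\partial R$, whose measure is summable in $k>j_R$ to $O(1)\cdot|R|$, so $\|\mathbf 1_{(2R)^c}(M_k-\mathsf{E}_k)b_R\|_1\lesssim \|b_R\|_1$ with geometric decay in $k-j_R$. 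Summing over $R$ and using $\sum_R\|b_R\|_1\lesssim \|f\|_1$ closes the estimate.

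The main obstacle is precisely the bad-part step in (i). In \cite{HX} the Rademacher coefficients allowed a reduction, via noncommutative Khintchine inequalities, to a square function whose diagonal cancellation is easy to exploit; with arbitrary $(\nu_k)\in\ell_\infty$ that route is unavailable, and the Calder\'on--Zygmund analysis must be carried out on $T$ itself. The delicate point is handling the noncommutativity of $b_R$ against the kernels $K_k$: one has to use the row/column decomposition of the Calder\'on--Zygmund output \`a la Parcet, so that both the row and column errors telescope over $k$ and $R$ independently of $\nu$ and produce the uniform constant $C_{p,d}$ required in the statement.
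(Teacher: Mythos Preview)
Your outline for (iii) and the $L_2$ input are close to what the paper does, and the interpolation/duality scheme for (ii) is fine. The genuine gap is in (i), precisely at the step you yourself flag as delicate.

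The per-scale estimate you propose, $\int_{(2R)^c}\|(M_k-\mathsf{E}_k)b_R\|_{L_1(\M)}\,dx \lesssim 2^{-|k-j_R|}\|b_R\|_1$, is correct; the ``Lipschitz'' mechanism here is really the boundary-layer bound $|\{x:(x+\partial B_k)\cap R\neq\emptyset\}|\lesssim 2^{-k(d-1)}\ell(R)$, since $K_k$ is piecewise constant. The failure is in your closing line ``summing over $R$ and using $\sum_R\|b_R\|_1\lesssim\|f\|_1$''. In any noncommutative Calder\'on--Zygmund decomposition the bad function contains off-diagonal pieces of the form $p_n(f-f_n)q_n$, and for these $\sum_n\|p_n f q_n\|_1$ is \emph{not} dominated by $\|f\|_1$. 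This is exactly the obstruction that forced Parcet to introduce pseudo-localization; invoking ``the row/column decomposition \`a la Parcet'' does not bypass it, because Parcet's treatment of the off-diagonal terms requires H\"ormander-type regularity of the full kernel of $T$, which $\sum_k\nu_k K_k$ (a sum of indicator kernels) does not possess. The paper says this explicitly in the introduction.

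The paper's route is different. It uses Cadilhac's form of the decomposition (Theorem~\ref{czdecom}): $b=\sum_n b_n$ with $b_n=p_n(f-f_n)q_n+q_{n-1}(f-f_n)p_n$, together with a projection $\zeta$ obeying $\zeta(x)b_n(y)\zeta(x)=0$ for $y\in 5Q_{x,n}$, which forces $\zeta(M_k-\mathsf{E}_k)b_n\zeta=0$ for all $k\ge n$. For $k<n$ the key new ingredient is the refined bound
\[
\sum_{k<n}\|M_k b_n\|_1\ \lesssim\ \big(\lambda\,\varphi(p_n)\big)^{1/2}\big(\varphi(fp_n)\big)^{1/2},
\]
obtained by factoring $p_Q f q_Q=(p_Q f^{1/2})(f^{1/2}q_Q)$, applying the operator-valued H\"older inequality (Lemma~\ref{mainlemma}), and using $q_Qf_Qq_Q\le\lambda q_Q$. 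The point is that this replaces the unusable $\|b_n\|_1$ by a product of two quantities that \emph{are} summable in $n$; a final Cauchy--Schwarz over $n$ gives $(\lambda\varphi(\1_{\mathcal N}-q))^{1/2}(\varphi(f(\1_{\mathcal N}-q)))^{1/2}\lesssim\|f\|_1$. This factorization-plus-Cauchy--Schwarz step is the missing idea in your sketch, and without it the argument does not close.
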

\begin{remark}\label{app}
\emph{The three estimates announced in Theorem \ref{t5} for infinite summations  over $k\in\mathbb Z$ should be understood as a consequence of the corresponding uniform boundedness for all finite summations with the standard approximation arguments (see e.g. \cite[Section 6.A]{JMX}). Therefore, as in \cite{MP,HX}, we will not explain the convergence of infinite sums appearing in the rest of the paper if there is no ambiguity.}
\end{remark}
\begin{remark}
\emph{In particular, when $(\nu_{k})$ is replaced by the Rademacher sequence $(\varepsilon_{k})$, we can immediately obtain the weak type $(1,1)$ and strong type $(p,p)$ results stated in \cite[Theorem 1.1]{HX} thanks to the noncommutative Khintchine inequalities in $L_{1,\infty}$-space \cite{C1} and in $L_{p}$-spaces \cite{LG}.
%Theorem \ref{t5} is stronger than the main result stated in \cite[Theorem 1.1]{HX}. Indeed, it is easy to see that when $(\nu_{k})$ is replaced by the Rademacher sequence $(\varepsilon_{k})$, the results proved in \cite{HX} is just a special case of Theorem \ref{t5} thanks to the noncommutative Khintchine inequalities in $L_{1,\infty}$ space \cite{C1} and in $L_{p}$ spaces \cite{LG}.
}
\end{remark}
If we set
\begin{align}\label{1211}
Df(x)=\sum_{k}\nu_{k}\big(M_k f(x)-M_{k-1}f(x)\big),
\end{align}
then together with the noncommutative martingale transforms \cite{Ran02}, Theorem \ref{t5} finds its second important application.
\begin{cor}\label{t7}\rm
Let $D$ be the differential transform defined as (\ref{1211}) with $\|(\nu_{k})\|_{\ell_{\infty}}\leq1$. Then the following estimates hold:
\begin{itemize}
\item[(i)] for $p=1$ and $f\in L_{1}(\mathcal N)$, $$\|Df\|_{L_{1,\infty}(\mathcal{N})}\leq C_{d}\|f\|_{1};$$

\item[(ii)] for $1<p<\infty$ and $f\in L_{p}(\mathcal N)$, $$\|Df\|_{p}\leq C_{p,d} \|f\|_{p}.$$
\end{itemize}
\end{cor}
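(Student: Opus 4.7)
The plan is to reduce the differential transform $D$ to a combination of operators already handled: the operator $T$ from Theorem~\ref{t5} and the noncommutative dyadic martingale transform. The starting point is the telescoping-like identity
\begin{align*}
M_{k}f - M_{k-1}f = (M_{k} - \mathsf{E}_{k})f \;-\; (M_{k-1} - \mathsf{E}_{k-1})f \;+\; (\mathsf{E}_{k} - \mathsf{E}_{k-1})f,
\end{align*}
which is valid because the dyadic conditional expectations $\mathsf{E}_{k}$ are well defined on $L_{p}(\mathcal N)$. Summing against $(\nu_{k})$ yields $Df = T_{1}f - T_{2}f + \Pi f$, where
\begin{align*}
T_{1}f = \sum_{k}\nu_{k}(M_{k}-\mathsf{E}_{k})f,\qquad T_{2}f = \sum_{k}\nu_{k}(M_{k-1}-\mathsf{E}_{k-1})f,\qquad \Pi f = \sum_{k}\nu_{k}(\mathsf{E}_{k}-\mathsf{E}_{k-1})f.
\end{align*}

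The first two pieces are exactly instances of the operator $T$ in (\ref{1}) associated to the $\ell_{\infty}$-sequences $(\nu_{k})$ and $(\nu_{k+1})$ respectively, both of $\ell_{\infty}$-norm at most one after a harmless index shift in $T_{2}$. So Theorem~\ref{t5} applies directly and produces the weak type $(1,1)$ and the strong type $(p,p)$ bounds for $T_{1}f$ and $T_{2}f$.

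The remaining term $\Pi f$ is the noncommutative martingale transform of $f$ with multipliers $(\nu_{k})$ on the dyadic filtration on $\mathcal N$. Since $\|\nu\|_{\ell_{\infty}}\le 1$, Randrianantoanina's theorem on noncommutative martingale transforms \cite{Ran02} supplies exactly the weak type $(1,1)$ bound and the strong type $(p,p)$ bounds for $\Pi f$, with constants depending only on $p$. Combining the three estimates via the ($p$-)triangle inequality in $L_{p}(\mathcal N)$ for $1<p<\infty$, and via the quasi-triangle inequality in $L_{1,\infty}(\mathcal N)$ (which produces only an additional absolute constant) for $p=1$, gives the corollary.

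Because the real analytic content (the CZ-type argument for $M_{k}-\mathsf{E}_{k}$ and the BMO-bound) is entirely absorbed into Theorem~\ref{t5}, and the martingale-transform content is quoted from \cite{Ran02}, there is no genuine obstacle here; the only mild point to check is that the decomposition is legitimate as a finite sum at the level of the approximation scheme mentioned in Remark~\ref{app}, so that rearranging the three subsums requires no additional justification.
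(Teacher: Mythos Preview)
Your argument is correct and matches the paper's proof essentially verbatim: the paper also splits $M_k - M_{k-1}$ as $(M_k-\mathsf{E}_k)+(\mathsf{E}_k-\mathsf{E}_{k-1})+(\mathsf{E}_{k-1}-M_{k-1})$, applies Theorem~\ref{t5} to the two outer pieces, and invokes \cite{Ran02} for the martingale transform in the middle. Your remark about the index shift in $T_2$ and the use of the quasi-triangle inequality in $L_{1,\infty}$ just make explicit what the paper leaves implicit.
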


\medskip

Let us briefly analyse the proof of Theorem \ref{t5}.
For the $(L_{\infty},\mathrm{BMO})$ estimate, the proof is standard and all the strong type $(p,p)$ estimates can be obtained by duality and interpolation since the strong type $(2,2)$ estimate of $T$ follows trivially from the commutative result and we prefer to present a noncommutative proof in Appendix A for warming up. Thus we give our main efforts to the weak type (1,1) estimate.

However, with a moment's thought, it is difficult to adapt the argument in \cite{JP1,MP,C,HX} to our setting. It is clear that the kernel associated with ${T}$ does not enjoy any regularity while the methods applied in \cite{JP1,MP,C} depend heavily on Lipschitz's regularity condition (see also \cite{HLX,CCP} for the weaker regularity condition).
On the other hand, the almost orthogonality principle used in \cite[Lemma 3.7]{HX} is no longer applicable for the present case. The main ingredient is a new kind of noncommutative Calder\'on-Zygmund decomposition communicated to us by Cadilhac \cite{C2}, see Theorem \ref{czdecom}. % and \cite[Page 7]{CCP}. % since Parcet's noncommutative Calder\'on-Zygmund decomposition \cite[(3.3)]{JP1} seems inefficient for the present operator.
This new CZ decomposition circumvents the pseudo-localization technique \cite{JP1} and still yields $L_1$ endpoint of standard CZ operators for operator-valued functions (see \cite{CCP} for more details). Furthermore,
to make this new Calder\'on-Zygmund decomposition work in our case, we are partially motivated by Cadilhac's note \cite{C2}. %, where Cadilhac deals with the weak $(1,1)$ problem on standard Calder\'on-Zygmund operator, see Remark \ref{clever}.
%On the other hand, Cadilhac's decomposition, compared to Parcet's one \cite{JP1}, admits a big advantage that the off-diagonal term that appears in the good functions vanishes (see Remark \ref{parcet}); recall that in order to deal with this term, Parcet had to develop a so-called pseudo-localization principle which constituted a major part of his long paper \cite{JP1} (see also \cite[Theorem 2.1]{C} for a simplified proof of this principle). However, for the present operator, the difficulty lies in that it is unclear at all that whether there is a desired pseudo-localization principle.
%But as we have pointed out previously, this difficulty can be overcome by almost orthogonality principle (see e.g. \cite[Lemma 3.13]{HX}) when we deal with the linear operator $L$ defined as (\ref{finite1}) (see \cite{HX} for more details), which is no longer applicable in present case.

The rest of the paper is organized as follows. In Section 2, we review some standard facts on noncommutative $L_{p}$-spaces. This section also introduces the content of noncommutative Calder\'on-Zygmund decomposition. In Section 3, we prove the weak type $(1,1)$ and strong type $(p,p)$ results announced in Theorem \ref{t5}, as well as Corollary
\ref{t7}. Section 4 deals with the $(L_{\infty},\mathrm{BMO})$ estimate in Theorem \ref{t5}. The proof of $L_{2}$ boundedness of $T$ will be presented in Appendix A. Finally, we include Appendix B with some further problems.

\textbf{Notation:} In all what follows, we write
$X\lesssim Y$ if $X\le CY$ for some inessential constant $C>0$ and we write $X\thickapprox Y$ to mean that $X\lesssim Y$ and $Y \lesssim X$.
%%%%%%%%%%%%%%%%%%%%%%%%%%%%%%%%%%%%%%%%%%%
\section{Preliminaries}
\subsection{Noncommutative $L_{p}$-spaces}\quad

\medskip

Let $\M$ be a semifinite von Neumann algebra equipped with a \emph{n.s.f} trace $\tau$. Let $\M_{+}$ be the positive part of $\M$ and denote by $\mathcal{S_{\M+}}$ the set of all $x\in\M_{+}$ such that $\tau(\mathrm{supp}x)<\infty$, where $\mathrm{supp}x$ means the support of $x$. Let $\mathcal{S}_{\M}$ be the linear span of $\mathcal{S_{\M+}}$. Then $\mathcal{S}_{\M}$ is a $w^{*}$-dense $\ast$-subalgebra of $\M$. Given $0< p<\infty$, we set
$$\|x\|_{p}=[\tau(|x|^p)]^{1/p}\ \ x\in\mathcal{S}_{\M},$$
where $|x|=(x^{\ast}x)^{\frac{1}{2}}$ is the modulus of $x$, it turns out that $\|\cdot\|_{p}$ is a norm on $\mathcal{S}_{\M}$ if $1\leq p<\infty$, and a $p$-norm if $0< p<1$. The completion of $(\mathcal{S}_{\M},\|\cdot\|_{p})$ is denoted by $L_{p}(\M,\tau)$ or simply by $L_{p}(\M)$. As usual, we set $L_{\infty}(\M) = \M$ equipped with the operator norm $\|\cdot\|_{\M}$. The  positive part of $L_{p}(\M)$ is written as $L_{p}(\M)_{+}$.
%Note that by definition, $L_{p}(\M)\cap\M$ is dense in $L_{p}(\M)$ for any $1\leq p\leq\infty$.

%Now we turn to the definition of noncommutative weak $L_{p}$-spaces $L_{p,\infty}(\M)$.  
Suppose that $\M\subset B(\mathcal{H})$ acts on a separable Hilbert space $\mathcal{H}$. A closed densely defined operator on $\mathcal{H}$ is said to be affiliated with $\M$ if it commutes with any unitary of $\M'$, where $\M'$ is the commutant of $\M$. If $x$ is a densely defined selfadjoint operator on $\mathcal{H}$
and $x = \int_{\R} \lambda \hskip1pt d \gamma_x(\lambda)$ is its spectral
decomposition, the spectral projection $\int_{\mathcal{I}} d
\gamma_x(\lambda)$ will be simply denoted by $\chi_{\mathcal{I}}(x)$, where $\mathcal{I}$ is a measurable subset of $\R$. A closed and densely defined operator
$x$ affiliated with $\mathcal{M}$ is called \emph{$\tau$-measurable} if
there is $\lambda > 0$ such that $$\tau \big( \chi_{(\lambda,\infty)}
(|x|) \big) < \infty.$$
Let $L_{0}(\M)$ be the family of the $\ast$-algebra of \emph{$\tau$-measurable} operators. For $0< p<\infty$, the weak $L_{p}$-space $L_{p,\infty}(\M)$ is defined as the set of all $x$ in $L_0(\M)$ with the following finite quasi-norm
$$\|x\|_{p,\infty}=\sup_{\lambda > 0}\lambda\tau \big( \chi_{(\lambda,\infty)}
(|x|) \big)^{\frac{1}{p}}.$$
It is already shown in \cite[Lemma 2.1]{JRWZ} that for any $x_1, x_2 \in
L_{1,\infty}(\M)$ and any $\lambda\in\R_{+}$
\begin{align}\label{distri}
\tau\big(\chi_{(\lambda,\infty)}
(|x_1+x_2|)\big)\leq \tau\big(\chi_{(\lambda/2,\infty)}
(|x_1|)\big)+\tau\big(\chi_{(\lambda/2,\infty)}(|x_2|)\big).
\end{align}
The reader is referred to \cite{FK,P2} for more information on noncommutative $L_p$-spaces.
\subsection{Noncommutative Hilbert-valued $L_{p}$-spaces}\quad

\medskip

In this subsection, we briefly introduce noncommutative Hilbert-valued $L_{p}$-spaces.
Let $(\Sigma,\mu)$ be a measurable space. The column space $L_p(\M;L^c_2(\Sigma))$ is the family of the operator-valued functions $f$ with finite norm (resp. $p$-norm) for $p\geq1$ (resp. $0<p<1$)
$$\|f\|_{L_p(\M;L^c_2(\Sigma))}=\Big\|\Big(\int_{\Sigma}f^*(\omega)f(\omega)d\mu(\omega)\Big)^{\frac12}\Big\|_{p}.$$
The reader is referred to \cite{P2} for precise definition and related properties of the noncommutative Hilbert-valued $L_p$-spaces.
The main property for our purpose is the following H\"older type inequality (see e.g. \cite[Proposition 1.1]{M}).%Given two operator-valued functions $f,g\in\bigcap_{0<p\leq\infty} L_p(\M;L^c_{2}(\Omega))$, due to the noncommutativity, in general it is not true that
%$$\Big|\int_{\Omega}f^*(\omega)g(\omega)d\mu(\omega)\Big|\leq \Big(\int_{\Omega}f^*(\omega)f(\omega)d\mu(\omega)\Big)^{\frac12}\Big(\int_{\Omega}g^*(\omega)g(\omega)d\mu(\omega)\Big)^{\frac12},$$
%which makes noncommutative analysis more involved. There are two ways to remedy this drawback.
\begin{lem}\label{mainlemma}\rm
Let $0<p,q,r\leq\infty$ be such that $1/r=1/p+1/q$. Then for any $f\in L_p(\M;L^c_2(\Sigma))$ and $g\in L_q(\M;L^c_{2}(\Sigma))$
$$\Big\|\int_{\Sigma}f^*(\omega)g(\omega)d\mu(\omega)\Big\|_{r}\leq \Big\|\Big(\int_{\Sigma}|f(\omega)|^2d\mu(\omega)\Big)^{\frac12}\Big\|_{p}
\Big\|\Big(\int_{\Sigma}|g(\omega)|^2d\mu(\omega)\Big)^{\frac12}\|_{q}.$$
\end{lem}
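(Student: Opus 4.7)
The strategy is to realize $\int_{\Sigma} f^{*}(\omega)g(\omega)\,d\mu(\omega)$ as an honest operator product $F^{*}G$ inside a larger noncommutative $L_{p}$-space, and then invoke the classical noncommutative H\"older inequality $\|F^{*}G\|_{r}\leq\|F\|_{p}\|G\|_{q}$ on that enlarged algebra. Thus the entire argument is an algebraic translation plus one application of an already-known inequality.

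The first step is to pass to the auxiliary semifinite von Neumann algebra $\widetilde{\M}=\M\,\overline{\otimes}\,B(L_{2}(\Sigma,\mu))$ equipped with the tensor trace $\widetilde{\tau}=\tau\otimes\mathrm{Tr}$. Fix a unit vector $\xi_{0}\in L_{2}(\Sigma,\mu)$ and set $e_{0}=1\otimes|\xi_{0}\rangle\langle\xi_{0}|$, a projection in $\widetilde{\M}$. The corner $e_{0}\widetilde{\M} e_{0}$ is canonically isomorphic to $\M$, and this identification is isometric at every level $L_{s}$, $0<s\leq\infty$. The column space $L_{p}(\M;L_{2}^{c}(\Sigma))$ can then be identified isometrically with the right module $L_{p}(\widetilde{\M})e_{0}$ via an embedding $f\mapsto F$ characterized by
$$F^{*}F=\int_{\Sigma}f^{*}(\omega)f(\omega)\,d\mu(\omega)\quad\text{in}\ e_{0}\widetilde{\M} e_{0}\cong\M,$$
so that $\|F\|_{L_{p}(\widetilde{\M})}=\|f\|_{L_{p}(\M;L_{2}^{c}(\Sigma))}$. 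Applying the analogous embedding $g\mapsto G\in L_{q}(\widetilde{\M})e_{0}$ and polarising the defining identity, one checks directly that
$$F^{*}G=\int_{\Sigma}f^{*}(\omega)g(\omega)\,d\mu(\omega)\quad\text{in}\ e_{0}\widetilde{\M} e_{0}\cong\M.$$

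The conclusion is now immediate: the classical noncommutative H\"older inequality in $L_{r}(\widetilde{\M})$ applied to $F^{*}G$ gives
$$\Big\|\int_{\Sigma}f^{*}(\omega)g(\omega)\,d\mu(\omega)\Big\|_{r}=\|F^{*}G\|_{r}\leq\|F\|_{p}\|G\|_{q},$$
which, after unpacking the norms via the embedding, is exactly the announced bound.

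The main point requiring genuine care, and the only obstacle beyond bookkeeping, is making the embedding $f\mapsto F$ rigorous for general $(\Sigma,\mu)$ and for the full range $0<p\leq\infty$ (recall that $\|\cdot\|_{p}$ is only a quasi-norm for $p<1$). I would handle this in the standard way: first verify the identification for simple $\mathcal{S}_{\M}$-valued functions of the form $\sum_{i}x_{i}\otimes \1_{E_{i}}$ with $\mu(E_{i})<\infty$, where $F$ is a visibly finite element of $\M\,\overline{\otimes}\,B(L_{2}(\Sigma))$ and both sides of the norm identity can be computed by hand; then extend by density using completeness of $L_{p}(\widetilde{\M})$ and of the column space $L_{p}(\M;L_{2}^{c}(\Sigma))$. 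Once this foundational step is in place, the proof is essentially a one-line invocation of the ordinary noncommutative H\"older inequality.
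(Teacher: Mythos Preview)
Your approach is correct and is in fact the standard way this inequality is proved: identify the column space $L_{p}(\M;L_{2}^{c}(\Sigma))$ isometrically with the corner $L_{p}(\M\,\overline{\otimes}\,B(L_{2}(\Sigma)))e_{0}$, recognize $\int_{\Sigma}f^{*}g\,d\mu$ as $F^{*}G$ in that picture, and apply the ordinary noncommutative H\"older inequality (valid for the full range $0<p,q,r\leq\infty$ by Fack--Kosaki). The density/extension step you flag is routine.

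As for comparison: the paper does not prove this lemma at all. It is stated with a reference to \cite[Proposition~1.1]{M} and used as a black box. Your argument is essentially the proof one finds in that reference or in the Pisier--Xu survey, so there is no genuine divergence---you have simply supplied the proof the paper chose to cite.
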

\subsection{Noncommutative dyadic martingales}\quad

\medskip

%The main tool for constructing the noncommutative Calder{\'o}n-Zygmund decomposition is noncommutative dyadic martingale theory. To this end, let us first introduce the related notions. 
Recall that a dyadic cube in $\R^{d}$ is the set
$$[2^{k}m_{1},2^{k}(m_{1}+1))\times\cdot\cdot\cdot\times[2^{k}m_{d},2^{k}(m_{d}+1)),$$
where $k,m_{1},\cdot\cdot\cdot,m_{d}\in\Z$.
Denote by $\Q$ the set of all dyadic cubes in $\R^d$.
Given an integer $k \in \Z$, let $\Q_k$ be the set of dyadic cubes with side length $\ell(Q)=2^{-k}$. Write $\sigma_k$ for the $\sigma$-algebra generated by $\Q_k$ and $\mathcal N_k=L_\infty(\mathbb R^d,\sigma_k,dx)\overline{\otimes}\M$ for the associated von Neumann subalgebra of $\mathcal N$, where $\mathcal N=L_\infty(\mathbb R^d)\overline{\otimes}\M$ was given in the introduction. Then $(\mathcal{N}_k)_{k \in \Z}$ is a sequence of increasing von Neumann subalgebras whose union $\cup_{k}\mathcal{N}_k$ is weak$^*$ dense in $\mathcal N$, and whence form a filtration with the associated dyadic conditional expectations $(\mathsf{E}_k)_{k\in\Z}$ defined as
$$\mathsf{E}_k(f):=f_{k}=\sum_{Q \in \Q_k}^{\null} f_Q \chi_Q,\;\forall f\in L_1(\mathcal N)$$
where $\chi_Q$ is the characteristic function of $Q$ and %$f_Q$ denotes the mean of $f$ over $Q$, that is,
$$f_Q = \frac{1}{|Q|} \int_Q f(y) \, dy.$$
Here $|Q|$ is the Lebesgue measure of $Q$.

Let $1\leq p\leq\infty$. The sequence $(f_k)_{k\in\mathbb Z}$ is called the $L_p$-martingale if for any integer $k$, 
$f_k\in L_p(\mathcal N)$.  The martingale difference sequence $df=(df_{k})_{k\in\Z}$ of $f$ is defined by $df_{k}=f_{k}-f_{k-1}$.

%For $k\in\Z$, it is easy verify that $\mathsf{E}_{k-1}(f_k)=f_{k-1}$. The sequence $(f_k)_{k\in\mathbb Z}\subset L_p(\mathcal N)$ is called a $L_p$-martingale for $1\leq p\leq\infty$; in this case,

\subsection{Noncommutative Calder{\'o}n-Zygmund decomposition}\quad

\medskip

We end this section with a description of the noncommutative Calder{\'o}n-Zygmund decomposition \cite{CCP}, whose construction is
based on Cuculescu's theorem \cite{Cuc}. %by applying the dyadic martingale theory.

%For further convenience, we make some conventions or definitions.
%The notation $dist(x,Q)$ means the distance between $x$ and $Q$.
%For all $x\in\R^{d}$, denote by $Q_{x,k}$ the unique cube in $\Q_k$ containing $x$ and $c_{x,k}$ denotes its centre.
%Note that for all $x,y \in \R^{d}$ and $k\in\Z$ and any odd positive integer $i$ , $x \in iQ_{y,k} \Leftrightarrow y \in iQ_{x,k}$.

As in \cite{C,JP1,HLX}, the noncommutative Calder{\'o}n-Zygmund decomposition will be constructed for operator-valued functions  in the following class
$$\mathcal N_{c,+} =\Big\{
f: \R^d \to \M\cap L_1(\M) \, \big| \ f \geq0, \
\overrightarrow{\mathrm{supp}} \hskip1pt f \ \ \mathrm{is \
compact} \Big\},$$
which is dense in $L_1(\mathcal N)_{+}$. Here
$\overrightarrow{\mathrm{supp}}$ stands for the support of $f$ as an
operator-valued function in $\R^d$, which is different from its support projection in $\mathcal N$. Moreover, given $f \in \mathcal{N}_{c,+}$ and $\lambda>0$, there exists $m_{\lambda}(f)\in\Z$ such that $f_{k}\leq\lambda\1_{\mathcal{N}}$ for all $k\leq m_{\lambda}(f)$ (see \cite[Lemma 3.1]{JP1}), where $\1_{\mathcal{N}}$ stands for the unit
elements in $\mathcal{N}$.

%For a fixed ,  To ease notation, we will use the normalization $m_{\lambda}(f)=0$ since in the remaining part on the proof of Theorem \ref{t5}~(i),  both $f \in \mathcal{N}_{c,+}$ and $\lambda\in(0,+\infty)$ will be fixed.
The following modified Cuculescu's theorem \cite{Cuc} can be found in \cite[Lemma 3.1]{JP1}.
\begin{lem}\label{cucu}\rm
Let $f \in \mathcal{N}_{c,+}$ and consider the associated positive
$L_{1}$-martingale $(f_k)_{k\in\mathbb Z}$  relative to the filtration $(\mathcal{N}_k)_{k \in \Z}$. Given $\lambda>0$,  there exists a
sequence of decreasing projections $(q_k)_{k\in\Z}$ defined by $q_k = \1_\mathcal{N}$ for $k\leq m_{\lambda}(f)$ and recursively for $k>m_{\lambda}(f)$
$$q_k=q_k(f,\lambda)=\chi_{(0,\lambda]}(q_{k-1} f_k q_{k-1})$$
such that
\begin{itemize}
\item [(i)] $q_k$ commutes with $q_{k-1} f_k
q_{k-1}$ for each $k\in\Z$;

\item [(ii)] $q_k$ belongs to $\mathcal{N}_k$ and $q_k f_k q_k \le \lambda \hskip1pt
q_k$ for each $k\in\Z$;

\item [(iii)] the following estimate holds $$\varphi \Big(
\mathbf{1}_\mathcal{N} - \bigwedge_{k \in\Z} q_k \Big) \le
\frac{\|f\|_1}{\lambda}.$$
\end{itemize}
\end{lem}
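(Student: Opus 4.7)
The plan is to follow the prescribed recursive construction and verify properties (i)--(iii) in turn. For $k\le m_\lambda(f)$, the choice $q_k=\mathbf{1}_{\mathcal N}$ is consistent with (ii) since $f_k\le\lambda\,\mathbf{1}_{\mathcal N}$ holds by the very definition of $m_\lambda(f)$. For $k>m_\lambda(f)$, the recursive formula $q_k=\chi_{(0,\lambda]}(q_{k-1}f_kq_{k-1})$ produces, via Borel functional calculus, a projection that automatically commutes with $q_{k-1}f_kq_{k-1}$, giving (i). For (ii), one checks by induction that $q_{k-1}\in\mathcal N_{k-1}\subset\mathcal N_k$ and $f_k=\mathsf{E}_k(f)\in\mathcal N_k$, so $q_{k-1}f_kq_{k-1}\in\mathcal N_k$ and hence $q_k\in\mathcal N_k$. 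The decreasing property $q_k\le q_{k-1}$ holds since the spectral projection is carried by the range of $q_{k-1}$, whence $q_kf_kq_k=q_k(q_{k-1}f_kq_{k-1})q_k\le\lambda q_k$ by the spectral definition.

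The substantive content lies in (iii). Set $q_\infty=\bigwedge_k q_k$ and $p_j=q_{j-1}-q_j$ for $j>m_\lambda(f)$. Since $(q_k)_k$ is decreasing, the $p_j$ are pairwise orthogonal projections with strong sum $\mathbf{1}_{\mathcal N}-q_\infty$. The key operator inequality is
\begin{equation*}
\lambda\,p_j\le p_jf_jp_j,
\end{equation*}
which follows from the fact that $p_j=\chi_{(\lambda,\infty)}(q_{j-1}f_jq_{j-1})$ is carried by the spectral subspace on which $q_{j-1}f_jq_{j-1}$ exceeds $\lambda$, together with the identity $p_jq_{j-1}=p_j$. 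Taking traces and exploiting that $p_j\in\mathcal N_j$, together with $f_j=\mathsf{E}_j(f)$ and the bimodule property of the conditional expectation, yields
\begin{equation*}
\lambda\,\varphi(p_j)\le\varphi(p_jf_jp_j)=\varphi(p_jf_j)=\varphi(p_j\mathsf{E}_j(f))=\varphi(p_jf).
\end{equation*}
Summing over $j$ and invoking the orthogonality of the $p_j$'s produces
\begin{equation*}
\lambda\,\varphi\bigl(\mathbf{1}_{\mathcal N}-q_\infty\bigr)\le\varphi\bigl((\mathbf{1}_{\mathcal N}-q_\infty)f\bigr)\le\varphi(f)=\|f\|_1,
\end{equation*}
where the last bound uses $f\ge 0$ and trace cyclicity.

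The main technical point is the identification $p_j=\chi_{(\lambda,\infty)}(q_{j-1}f_jq_{j-1})$: this relies crucially on the commutation in (i), which guarantees that the difference of two spectral projections of the same positive operator is again a spectral projection of it, so that the operator inequality above can be extracted directly from the spectral calculus. Once this is in place, the remainder is a routine telescoping-plus-martingale computation, matching the classical Cuculescu construction in \cite{Cuc} and its noncommutative adaptation in \cite{JP1}.
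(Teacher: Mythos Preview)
The paper does not give its own proof of this lemma; it is quoted verbatim from \cite[Lemma~3.1]{JP1} (Cuculescu's construction). Your argument is exactly the standard proof one finds there: functional calculus for (i)--(ii), and for (iii) the telescoping identity $\mathbf 1_{\mathcal N}-q_\infty=\sum_j p_j$ together with $\lambda\,\varphi(p_j)\le\varphi(p_jf_j)=\varphi(p_jf)$ and $\sum_j p_j\le \mathbf 1_{\mathcal N}$.

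One small caveat on the step you single out as ``the main technical point'': with the recursion written literally as $q_j=\chi_{(0,\lambda]}(q_{j-1}f_jq_{j-1})$ (half-open at $0$), the identity $p_j=\chi_{(\lambda,\infty)}(q_{j-1}f_jq_{j-1})$ need not hold, because $p_j=q_{j-1}-q_j$ also contains the part of the range of $q_{j-1}$ lying in the kernel of $q_{j-1}f_jq_{j-1}$, and on that part the inequality $\lambda p_j\le p_jf_jp_j$ fails. This is a familiar notational wrinkle: in \cite{JP1} and most sources the construction is equivalently written as $q_j=q_{j-1}-\chi_{(\lambda,\infty)}(q_{j-1}f_jq_{j-1})$, under which your identification is exact and the rest of your computation goes through verbatim.
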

 %Now we adopt Cuculescu's construction \cite{Cuc} to $(f_k)_{k\in\mathbb Z}$ relative to the dyadic filtration $(\mathcal{N}_k)_{k \in\Z}$,
In fact, in the present operator-valued setting, there exists another useful expression for the $q_k$'s constructed in Lemma \ref{cucu}. It is easy to verify that
$$q_{k}=\sum_{Q\in\Q_{k}}q_{Q}\chi_{Q},$$
for $k\in\Z$, and $q_{Q}$ are projections in $\M$ with
$$q_{Q}=\begin{cases} \1_\M & \mbox{if} \ k \leq
m_{\lambda}(f),
\\ \chi_{(0,\lambda]} \big( q_{\widehat{Q}} f_Q q_{\widehat{Q}}\big) & \mbox{if} \ k > m_{\lambda}(f),
\end{cases}$$
where $\widehat{Q}$ is the dyadic father of $Q$. Accordingly these projections satisfy
\begin{equation}\label{czd5}
%\begin{split}
%\begin{center}
\  q_Q\leq q_{\widehat{Q}},\ \
\  q_Q\  \mbox{commutes\ with}\  q_{\widehat{Q}} f_Q q_{\widehat{Q}},\ \
\   q_Q f_Q q_Q \le \lambda q_Q.
%\end{center}
%\end{split}
\end{equation}

Moreover, if we define $p_k$ as
\begin{equation}\label{czd1}
p_k=q_{k-1}-q_k=\sum_{Q\in\Q_{k}}(q_{\widehat{Q}}-q_{Q})\chi_{Q}\triangleq\sum_{Q\in\Q_{k}}p_Q\chi_{Q},
\end{equation}
then it is not difficult to check that
$$\sum_{k \in \Z} p_k = \1_\mathcal{N} - q =q^\perp\quad \mbox{with} \quad q = \bigwedge_{k \in
\Z} q_k$$
and clearly for each $k\in\Z$,
\begin{equation}\label{czd1123}
\|p_kf_{k}p_k\|_{\infty}\leq2^{d}\lambda.
\end{equation}

%Before introducing the noncommutative Calder{\'o}n-Zygmund decomposition, we need more related notions. Let $i$ be an odd positive integer, $iQ$ denotes the cube with the same center as $Q$ such that $\ell(iQ)=i\ell(Q)$. For all $x\in\R^{d}$, $Q_{x,k}$ stands for the unique cube in $\Q_k$ containing $x$.

The following theorem was communicated to us by Cadilhac \cite{C2} and we refer the reader to \cite{CCP} for more details.
\begin{thm}\label{czdecom}\rm
Fix $f\in\mathcal N_{c,+}$ and $\lambda>0$. Let $(q_k)_{k\in\Z}$ and $(p_k)_{k\in\Z}$ be two sequences of projections appeared above. Define a projection $\zeta\in \mathcal N$ as
\begin{equation}\label{czd9}
%\begin{center}
\zeta = \big(\bigvee_{Q\in \Q} p_Q\chi_{5Q}\big)^{\bot},
%\end{center}
\end{equation}
where $5Q$ is the cube with the same center as $Q$ such that $\ell(5Q)=5\ell(Q)$.
Then there exists a decomposition of $f$,
\begin{equation}\label{czd76789}
f = g + b
\end{equation}
such that the following assertions hold.

\begin{enumerate}[(1)]
\item [(i)]$\varphi(\mathbf 1_\mathcal{N}-\zeta) \leq 5^{d}\dfrac{\|f\|_1}{\lambda}$.
\item[(ii)]$g=qfq+\sum_{n\in\Z}p_{n}f_{n}p_{n}$ satisfies:
$\| g\|_1 \le\|f\|_1 \quad \mbox{and} \quad \|
g\|_\infty \le 2^{d} \lambda$.

%\item[(iii)]$g_\mathit{off}=0$.

\item[(iii)]$b=\sum_{n\in\Z}b_{n}$, where
\begin{equation}\label{czd6}
b_{n}=p_n (f-f_{n}) q_{n}+q_{n-1}(f-f_{n})p_n.
\end{equation}
Each $b_{n}$ satisfies two cancellation conditions: for $Q\in \Q_{n}$,
     $\int_Q b_{n} = 0;$
    and for all $x,y\in\R^{d}$ such that $y \in 5Q_{x,n}$, $\zeta(x)b_{n}(y)\zeta(x) = 0$, where $Q_{x,n}$ stands for the unique cube in $\Q_n$ containing $x$.
\end{enumerate}
\end{thm}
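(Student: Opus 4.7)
I would first establish the algebraic identity $f = g + b$ and then verify the three numbered claims in turn. To prove the identity, I expand $f = (q + q^\perp) f (q + q^\perp)$ using $q^\perp = \sum_n p_n$, split the double sum $q^\perp f q^\perp = \sum_{n,m} p_n f p_m$ into its diagonal and two triangular parts, and apply the telescoping identities $\sum_{m>n} p_m = q_n - q$ and $\sum_{m<n} p_m = \mathbf{1}_{\mathcal{N}} - q_{n-1}$. After cancellation this yields
\[
f = qfq + \sum_n p_n f p_n + \sum_n p_n f q_n + \sum_n q_n f p_n.
\]
Comparing with the claimed $g + b$ and rewriting $q_{n-1} = q_n + p_n$, the discrepancy collapses to $\sum_n (p_n f_n q_n + q_n f_n p_n)$, which I would show vanishes termwise: Cuculescu's commutation (Lemma \ref{cucu}(i)) forces $q_n f_n q_{n-1} = q_{n-1} f_n q_n$, and multiplying by $p_n$ on the appropriate side together with $p_n q_n = 0$ gives $p_n f_n q_n = 0$.

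\textbf{Claims (i) and (ii).} For (i) I use subadditivity of $\varphi$ on a join of projections:
\[
\varphi(\mathbf{1}_{\mathcal{N}} - \zeta) \leq \sum_{Q} \varphi(p_Q \chi_{5Q}) = 5^d \sum_k \varphi(p_k) = 5^d\,\varphi(\mathbf{1}_{\mathcal{N}} - q),
\]
and the bound $5^d \|f\|_1/\lambda$ follows from Lemma \ref{cucu}(iii). For (ii), the projections $q, p_1, p_2, \ldots$ are pairwise orthogonal (since $q \leq q_n$ and $p_n q_n = 0$), so $g$ decomposes as an orthogonal sum and $\|g\|_\infty = \max\bigl(\|qfq\|_\infty, \sup_n \|p_n f_n p_n\|_\infty\bigr)$. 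The diagonal terms $p_n f_n p_n$ are controlled by $2^d \lambda$ by (\ref{czd1123}); for $qfq$ I combine $q \leq q_k$ with Lemma \ref{cucu}(ii) to get $q f_k q \leq \lambda q$, then pass $k \to \infty$ using $L_1$-convergence of the positive martingale $(f_k)$. The $L_1$ estimate comes from positivity: $\|g\|_1 = \varphi(g) = \varphi(fq) + \sum_n \varphi(f p_n) = \varphi(f) = \|f\|_1$, where $\varphi(p_n f_n p_n) = \varphi(f_n p_n) = \varphi(f p_n)$ uses $p_n \in \mathcal{N}_n$.

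\textbf{Claim (iii) and main obstacle.} The integral cancellation $\int_Q b_n = 0$ is immediate: on $Q \in \Q_n$ the factors $p_n(y), q_n(y), q_{n-1}(y), f_n(y)$ are constant and equal to $p_Q, q_Q, q_{\widehat{Q}}, f_Q$, while $\int_Q (f - f_Q)\,dy = 0$. For the pointwise cancellation, $b_n(y)$ carries a $p_{Q_{y,n}}$ factor on both sides, so it suffices to show $\zeta(x)\, p_{Q_{y,n}} = 0$ when $y \in 5Q_{x,n}$; by definition of $\zeta$ this holds as soon as $x \in 5 Q_{y,n}$. To establish that symmetry, set $\ell = \ell(Q_{x,n})$; the triangle inequality gives $|c(Q_{x,n}) - c(Q_{y,n})|_\infty < 3\ell$, but the centers of same-scale dyadic cubes differ coordinatewise by integer multiples of $\ell$, so this distance is in fact $\leq 2\ell$, forcing $Q_{x,n} \subset 5 Q_{y,n}$. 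The two points where I expect genuine care to be needed are the algebraic identity in Step 1 — specifically the appeal to $p_n f_n q_n = 0$ via Cuculescu — and the small piece of dyadic geometry just sketched; everything else is bookkeeping.
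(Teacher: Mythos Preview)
The paper does not prove this theorem; it is quoted from a private communication of Cadilhac and the reader is referred to \cite{CCP} for details. So there is no in-paper argument to compare against. Your proposal, however, is a correct self-contained proof, and the two steps you flag as the delicate ones --- the Cuculescu commutation giving $p_nf_nq_n=0$ and the dyadic symmetry $y\in 5Q_{x,n}\Rightarrow x\in 5Q_{y,n}$ --- are handled correctly. A few small comments:

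\begin{itemize}
\item In the derivation of $f=qfq+\sum_n p_nfp_n+\sum_n p_nfq_n+\sum_n q_nfp_n$, make explicit that in the upper-triangular sum you fix the left index and telescope the right, and vice versa in the lower-triangular sum; otherwise the two telescoping identities collapse back to a tautology.
\item The sentence ``$b_n(y)$ carries a $p_{Q_{y,n}}$ factor on both sides'' is slightly misleading: each of the two summands carries $p_{Q_{y,n}}$ on \emph{one} side. Your reduction is still valid because $\zeta(x)p_{Q_{y,n}}=0$ is equivalent to $p_{Q_{y,n}}\zeta(x)=0$ by selfadjointness, killing both summands.
\item For $\|qfq\|_\infty\le\lambda$, passing to the limit is cleanest via weak-$*$ (or $L_2$) convergence of $f_k\to f$, which is available since $f\in\mathcal N_{c,+}\subset L_\infty(\mathcal N)$; $L_1$-convergence alone needs an extra word to transport the operator inequality.
\item Your computation actually gives the equality $\|g\|_1=\|f\|_1$, which is stronger than the stated $\le$.
\end{itemize}
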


\section{Proof of Theorem \ref{t5}: Weak type $(1,1)$ and strong type $(p,p)$ estimates}
%%%%%%%%%%%%%%%%%%%%%%%%%%%%%%%%%%%%%%%%%%%%%%%%%%%%%%%%%%%%%%%%
%%%%%%%%%%%%%%%%%%%%%%%%%%%%%%%%%%%%%%%%%%%%%%%%%%%%%%%%%%%%%%%%
In this section, we prove conclusions $(i)$ and $(ii)$ stated in Theorem \ref{t5}, while Corollary \ref{t7} will be shown at the end of this section. We begin with the weak type $(1,1)$ estimate.

By decomposing $f = f_{1}-f_{2} +i(f_{3}-f_{4})$ with positive $f_{j}$ such that $\|f_{j}\|_{1}\leq\|f\|_{1}$ for $j=1,2,3,4$, we may assume that $f$ is positive. Since $\mathcal N_{c,+}$ is dense in $L_1(\mathcal N)_{+}$ and by  the standard approximation argument, it suffices to show the desired weak type $(1,1)$ estimate for $f\in \mathcal N_{c,+}$. Now fix one $f\in \mathcal N_{c,+}$ and a $\lambda\in(0,+\infty)$. Without loss of generality, we may assume that $m_{\lambda}(f)=0$ in this subsection. By the noncommutative Calder\'on-Zygmund decomposition-Theorem \ref{czdecom} and the property of distribution (\ref{distri}),
$$\varphi(\chi_{(\lambda,\infty)}(|Tf|))\leq \varphi(\chi_{(\lambda/2,\infty)}(|Tg|))+\varphi(\chi_{(\lambda/2,\infty)}(|Tb|)).$$
Therefore, it suffices to show
$$
\varphi(\chi_{(\lambda/2,\infty)}(|Th|))\lesssim\frac{\| f\|_{1}}{\lambda}
$$
for $h=g$ and $b$.
%\begin{align}\label{HLX7}
%\varphi(\chi_{(\lambda/2,\infty)}(|Tb|))\lesssim\frac{\| f\|_{1}}{\lambda}.
%\end{align}
\subsection{Weak type estimate for good function}\quad

\medskip

We will need the following fact that $T$ is bounded on $L_{2}(\mathcal N)$, whose proof can be found in Appendix A.
\begin{lem}\label{t1}\rm
Let $h\in L_{2}(\mathcal N)$. Then there exists a constant $C_{d}$ depending only on the dimension
$d$ such that
$$\|Th\|_{2}\leq C_{d}\|h\|_{2}.$$
\end{lem}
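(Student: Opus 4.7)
My plan is to prove the $L_{2}$-bound via the Cotlar--Stein almost orthogonality lemma on the Hilbert space $L_{2}(\mathcal N)$. Note that $M_{k}$ is convolution with the even kernel $|B_{k}|^{-1}\1_{B_{k}}$, hence self-adjoint on $L_{2}(\mathcal N)$, and the dyadic conditional expectation $\mathsf E_{k}$ is likewise self-adjoint. Writing $T_{k} := \nu_{k}(M_{k}-\mathsf E_{k})$ we therefore have $T_{k}^{\ast} = \bar\nu_{k}(M_{k}-\mathsf E_{k})$. With $|\nu_{k}|\leq 1$, the boundedness of $T = \sum_{k} T_{k}$ will follow from Cotlar--Stein as soon as I establish the almost orthogonality estimate
\[
\|(M_{j}-\mathsf E_{j})(M_{k}-\mathsf E_{k})\|_{L_{2}(\mathcal N)\to L_{2}(\mathcal N)} \leq C_{d}\,2^{-\alpha|j-k|}
\]
for some $\alpha>0$, since this gives $\|\sum_{k}\nu_{k}T_{k}\|\lesssim\sum_{n\in\Z}2^{-\alpha|n|/2}<\infty$ uniformly in $\nu$.

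For the decay estimate I assume $j\geq k$ by symmetry and split
\[
(M_{j}-\mathsf E_{j})(M_{k}-\mathsf E_{k}) = (M_{j}-\mathsf E_{j})M_{k} - (M_{j}-\mathsf E_{j})\mathsf E_{k}.
\]
For the second piece, the tower property $\mathsf E_{j}\mathsf E_{k}=\mathsf E_{k}$ reduces it to $M_{j}\mathsf E_{k}-\mathsf E_{k}$. Since $\mathsf E_{k}f=\sum_{Q\in\Q_{k}}f_{Q}\chi_{Q}$ is constant on cubes of side $2^{-k}$, convolution with a kernel supported in a ball of radius $2^{-j}$ modifies the function only in a boundary strip of width $O(2^{-j})$ along the $\Q_{k}$-faces, of relative volume $\sim 2^{-(j-k)}$ in each cube. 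A Cauchy--Schwarz bookkeeping over adjacent-cube pairs yields $\|M_{j}\mathsf E_{k}f-\mathsf E_{k}f\|_{2}^{2}\lesssim 2^{-(j-k)}\|f\|_{2}^{2}$. For the first piece, $M_{k}f$ is smooth at scale $2^{-k}$: a Poincar\'e inequality on each cube of $\Q_{j}$ gives $\|\mathsf E_{j}M_{k}f-M_{k}f\|_{2}\lesssim 2^{-j}\|\nabla M_{k}f\|_{2}\lesssim 2^{-(j-k)}\|f\|_{2}$, while a Plancherel estimate using $|\widehat{\phi}_{j}(\xi)-1|\lesssim\min\!\bigl(1,(2^{-j}|\xi|)^{2}\bigr)$ combined with the decay of $\widehat{\phi}_{k}$ yields $\|M_{j}M_{k}f-M_{k}f\|_{2}\lesssim 2^{-(j-k)}\|f\|_{2}$. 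Combining the pieces delivers the desired bound with $\alpha=1/2$.

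The only subtlety is that $f$ is operator-valued, living in $L_{2}(\mathcal N)\cong L_{2}(\R^{d})\otimes_{2}L_{2}(\M)$. Each ingredient above --- Plancherel, Poincar\'e, and the boundary-strip accounting --- is either an $L_{2}(\M)$-isometry in the auxiliary variable or a scalar pointwise inequality applied after replacing the complex modulus by $\|\cdot\|_{L_{2}(\M)}$; because $L_{2}(\M)$ is itself a Hilbert space, every scalar estimate transfers verbatim. This is precisely why the author remarks that Lemma~\ref{t1} follows trivially from the commutative case, and the present warm-up essentially consists of recording this observation and verifying that Cotlar--Stein applies in the finite-sum approximations permitted by Remark~\ref{app}. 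The hardest part is the careful adjacent-cube estimate for $M_{j}\mathsf E_{k}-\mathsf E_{k}$; everything else is Fourier-analytic routine.
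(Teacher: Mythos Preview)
Your argument is correct and delivers the bound, but it follows a somewhat different route from the paper's Appendix~A. Both proofs are almost-orthogonality arguments, yet the paper does not apply Cotlar--Stein directly: instead it invokes Carbery's variant, taking the martingale differences $Q_n=\Delta_n=\mathsf E_n-\mathsf E_{n-1}$ as an auxiliary resolution of the identity and verifying $\|(M_k-\mathsf E_k)\Delta_n\|_{2\to 2}\lesssim 2^{-|k-n|/2}$ through purely geometric means---constancy of $dh_n$ on atoms of $\Q_n$, cancellation over atoms of $\Q_{n-1}$, boundary-strip volume counts, and the auxiliary averaging operator $M_{k,n}$. You instead exploit the self-adjointness of $M_k-\mathsf E_k$ to feed Cotlar--Stein directly, estimating $(M_j-\mathsf E_j)(M_k-\mathsf E_k)$ itself; the $\mathsf E_k$ piece uses the same boundary-strip idea as the paper, but for $(M_j-\mathsf E_j)M_k$ you bring in Fourier analysis (Plancherel for $M_jM_k-M_k$, and a Poincar\'e/gradient bound $\|\nabla M_kf\|_2\lesssim 2^{k}\|f\|_2$ for $\mathsf E_jM_k-M_k$). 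Your route is slightly more direct since it needs no auxiliary decomposition; the paper's route has the advantage that the boundary-strip and $M_{k,n}$ estimates it isolates are exactly the tools reused in the weak $(1,1)$ argument for the bad part and in the $L_2$-method of Appendix~B, so the appendix proof really functions as a warm-up for those later calculations. The passage to operator-valued $h$ is identical in both approaches: since every operator in sight acts only in the $\R^d$-variable, its norm on $L_2(\mathcal N)=L_2(\R^d;L_2(\M))$ coincides with its scalar $L_2(\R^d)$ norm.
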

\begin{prop}\label{good function}\rm
The following estimate holds.
$$
\varphi(\chi_{(\lambda/2,\infty)}(|Tg|))\lesssim\frac{\| f\|_{1}}{\lambda}.
$$
\end{prop}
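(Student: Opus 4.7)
The plan is the textbook good-function argument, now that Lemma \ref{t1} is in hand. Since $g$ is a selfadjoint (indeed positive) element of $L_1(\mathcal{N}) \cap L_\infty(\mathcal{N})$ with both $\|g\|_1 \le \|f\|_1$ and $\|g\|_\infty \le 2^d \lambda$ from Theorem \ref{czdecom}(ii), it sits comfortably in $L_2(\mathcal{N})$, so the $L_2$-boundedness of $T$ applies and one can conclude by Chebyshev.

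Concretely, I would first invoke the noncommutative Chebyshev inequality
$$\varphi\bigl(\chi_{(\lambda/2,\infty)}(|Tg|)\bigr) \;\le\; \frac{4}{\lambda^{2}}\,\|Tg\|_{2}^{2}.$$
Next, by Lemma \ref{t1},
$$\|Tg\|_{2}^{2} \;\le\; C_{d}^{2}\,\|g\|_{2}^{2}.$$
Finally, using the standard interpolation estimate $\|g\|_{2}^{2} = \tau(g^{*}g) \le \|g\|_{\infty}\,\|g\|_{1}$ together with the two bounds on $g$ from Theorem \ref{czdecom}(ii),
$$\|g\|_{2}^{2} \;\le\; \|g\|_{\infty}\,\|g\|_{1} \;\le\; 2^{d}\lambda\,\|f\|_{1}.$$
Chaining the three inequalities yields
$$\varphi\bigl(\chi_{(\lambda/2,\infty)}(|Tg|)\bigr) \;\le\; \frac{4\cdot 2^{d}\,C_{d}^{2}\,\|f\|_{1}}{\lambda},$$
which is the desired estimate.

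There is essentially no obstacle here: all the work is packaged into Lemma \ref{t1} (the $L_2$-boundedness of $T$, proved in Appendix A) and into Theorem \ref{czdecom}, which already delivers the two critical size bounds on $g$. The only point requiring any care is to observe that $g$ is indeed in $L_2(\mathcal{N})$ so that Lemma \ref{t1} is applicable; but this is immediate from $\|g\|_2^2 \le \|g\|_\infty \|g\|_1 < \infty$. The real content of the weak $(1,1)$ argument will appear in the parallel statement for the bad part $b$, which is where the cancellation properties in Theorem \ref{czdecom}(iii) and the projection $\zeta$ come into play.
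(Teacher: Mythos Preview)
Your argument is correct and coincides with the paper's own proof: Chebyshev, then the $L_2$-boundedness of $T$ from Lemma \ref{t1}, then $\|g\|_2^2 \le \|g\|_\infty \|g\|_1$ combined with the bounds in Theorem \ref{czdecom}(ii). There is nothing to add.
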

%Now we are ready to prove estimate \eqref{HLX15}.
\begin{proof}
The desired weak type $(1,1)$ estimate for good function follows from Lemma \ref{t1} and conclusion (ii) in Theorem \ref{czdecom}. Indeed, by the Chebychev and H\"{o}lder inequalities,
$$\varphi(\chi_{(\lambda/2,\infty)}(|Tg|))\leq\frac{\|Tg\|^{2}_{2}}{\lambda^{2}}
\lesssim\frac{\|g\|_{2}^{2}}{\lambda^{2}}\leq\frac{\|g\|_{1}\|g\|_{\infty}}{\lambda^{2}}
\lesssim\frac{\|f\|_{1}}{\lambda}.$$
This completes the argument of our assertion for $Tg$.
\end{proof}
\subsection{Weak type estimate for the bad function}\quad

\medskip

Using the
projection $\zeta$ introduced in (\ref{czd9}), we
decompose $Tb$ in the following way
$$Tb = (\1_\mathcal{N} - \zeta) T
b (\1_\mathcal{N} - \zeta) + \zeta \hskip1pt T b (\1_\mathcal{N} - \zeta)
+ (\1_\mathcal{N} - \zeta) T b \zeta + \zeta \hskip1pt T b
\zeta.$$
Therefore, by (\ref{distri}) and conclusion (i) in Theorem \ref{czdecom},
\begin{align*}
\hskip1pt \varphi \big(\chi_{(\lambda/2,\infty)}(|Tb|)
\big)
\lesssim&\ \hskip1pt \varphi (\1_\mathcal{N} - \zeta) +
\hskip1pt \varphi \big(\chi_{(\lambda/8,\infty)}(|\zeta Tb\zeta|)
\big)\\
\lesssim&\ \frac{ \|f\|_{1}}{\lambda}+\varphi \big(\chi_{(\lambda/8,\infty)}(|\zeta Tb\zeta|)
\big).
\end{align*}
Hence, we are reduced to estimating $\varphi \big(\chi_{(\lambda/8,\infty)}(|\zeta Tb\zeta|)
\big)$.
\begin{prop}\label{bad function}\rm
The following estimate holds.%\begin{align}\label{bd}
$$\varphi \big(\chi_{(\lambda/8,\infty)}(|\zeta Tb\zeta|)\big)\lesssim\frac{ \|f\|_{1}}{\lambda}.$$
%\end{align}
\end{prop}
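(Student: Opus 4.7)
The plan is to reduce the weak-type bound to an $L_{2}$-type estimate via Chebyshev's inequality, and then exploit the two cancellation properties of $b_n$ from Theorem \ref{czdecom}(iii) together with the $L_{2}$-boundedness of $T$ from Lemma \ref{t1}. Since
$$\varphi\bigl(\chi_{(\lambda/8,\infty)}(|\zeta Tb\zeta|)\bigr)\leq \frac{64}{\lambda^{2}}\,\|\zeta Tb\zeta\|_{2}^{2},$$
it suffices to show $\|\zeta Tb\zeta\|_{2}^{2}\lesssim \lambda\|f\|_{1}$.

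\textbf{Cancellation reductions.} Writing $T=\sum_{k}\nu_{k}(M_{k}-\mathsf{E}_{k})$ and $b=\sum_{n}b_{n}$, I would first record two consequences of Theorem \ref{czdecom}(iii). For $k\leq n$, each cube in $\Q_{k}$ is a disjoint union of cubes in $\Q_{n}$, so the vanishing integral $\int_{Q'}b_{n}=0$ on $Q'\in\Q_{n}$ immediately yields $\mathsf{E}_{k}b_{n}=0$. For $k\geq n$, the dyadic nesting $Q_{x,k}\subset Q_{x,n}\subset 5Q_{x,n}$ combined with $\zeta(x)b_{n}(y)\zeta(x)=0$ on $y\in 5Q_{x,n}$ gives $\zeta\mathsf{E}_{k}b_{n}\zeta=0$. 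Moreover, the geometry of $5Q_{x,n}$ (whose boundary is at distance at least $2\cdot 2^{-n}$ from any point $x\in Q_{x,n}$) implies $B_{k}(x)\subset 5Q_{x,n}$ whenever $k\geq n-1$, so the same diagonal cancellation forces $\zeta M_{k}b_{n}\zeta=0$ for such $k$. Combining these, $\zeta\mathsf{E}_{k}b\zeta=0$ for every $k$, and hence
$$\zeta Tb\zeta=\sum_{n}\sum_{k\leq n-2}\nu_{k}\,\zeta M_{k}b_{n}\zeta.$$

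\textbf{$L_{2}$-estimate and main obstacle.} To bound $\|\zeta Tb\zeta\|_{2}^{2}$ I would use $\|\zeta Tb\zeta\|_{2}\leq \|Tb\|_{2}\leq C_{d}\|b\|_{2}$ from Lemma \ref{t1} together with the orthogonality $\varphi(b_{n}^{*}b_{m})=0$ for $n\neq m$. The latter follows from $p_{n}p_{m}=0$ (a direct consequence of the decreasing Cuculescu sequence $(q_{k})$ in Lemma \ref{cucu}): in each of the four terms of the expansion of $b_{n}^{*}b_{m}$, a cyclic permutation of $\varphi$ produces the factor $p_{n}p_{m}$ or $p_{m}p_{n}$, which vanishes. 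Hence $\|b\|_{2}^{2}=\sum_{n}\|b_{n}\|_{2}^{2}$. The main obstacle is to prove
$$\sum_{n}\|b_{n}\|_{2}^{2}\lesssim \lambda\|f\|_{1}.$$
To attack this I plan to combine: the operator inequality $q_{\widehat{Q}}f_{Q}q_{\widehat{Q}}\leq 2^{d}\lambda\, q_{\widehat{Q}}$, which follows from Lemma \ref{cucu}(ii) $q_{\widehat{Q}}f_{\widehat{Q}}q_{\widehat{Q}}\leq\lambda q_{\widehat{Q}}$ together with the pointwise bound $f_{Q}\leq 2^{d}f_{\widehat{Q}}$; a standard $2\times 2$ block-positivity argument yielding $\|p_{Q}f_{Q}q_{Q}\|_{\infty}\leq 2^{d/2}\lambda$ for the off-diagonal blocks; the martingale-variance estimate $\|p_{n}(f-f_{n})q_{n}\|_{2}^{2}\leq \varphi\bigl(q_{n}\mathsf{E}_{n}((f-f_{n})^{2})q_{n}\bigr)$, which is valid because $p_{n},q_{n}\in\mathcal{N}_{n}$ and $p_{n}\leq \mathbf{1}_{\mathcal{N}}$; and Lemma \ref{cucu}(iii) $\varphi(\mathbf{1}_{\mathcal{N}}-q)\leq \|f\|_{1}/\lambda$ to sum the localized estimates. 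The delicate point is that $\|b\|_{2}^{2}$ is not in general $\lesssim \lambda\|f\|_{1}$ (since $\|f\|_{2}^{2}$ is not controlled by $\lambda\|f\|_{1}$), so the $\zeta$-sandwich and the decomposition into $p_{n}$- and $q_{n}$-blocks must be exploited carefully to extract the factor of $\lambda$ from the Cuculescu construction, while the $L_{1}$-mass of $f$ enters only through $\sum_{n}\varphi(p_{n})\leq \|f\|_{1}/\lambda$.
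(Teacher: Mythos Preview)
Your cancellation reductions are correct and match the paper's, but the $L_2$ route you propose afterwards has a genuine, unfixable gap. The moment you write $\|\zeta Tb\zeta\|_2\leq\|Tb\|_2\leq C_d\|b\|_2$, you have discarded $\zeta$ entirely; the estimate $\sum_n\|b_n\|_2^2\lesssim\lambda\|f\|_1$ that you then need is simply false (already in the commutative case: $\|b_Q\|_2^2\le\int_Q|f|^2$ has no reason to be bounded by $\lambda\int_Q|f|$). The ingredients you list --- the $2\times2$ block bound $\|p_Qf_Qq_Q\|_\infty\lesssim\lambda$, the martingale-variance identity, and $\sum_n\varphi(p_n)\le\|f\|_1/\lambda$ --- control quantities like $\|p_nf_np_n\|_2^2$ or $\|p_nf_nq_n\|_2^2$, but not $\|p_n(f-f_n)q_n\|_2^2$, which involves the \emph{full} $f$ squeezed only on one side by $p_n$. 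You acknowledge this obstacle at the end, but your plan has already thrown away the only tool ($\zeta$) that could localise it.

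The paper avoids this entirely by working in $L_1$. It applies Chebyshev with exponent $1$ to reduce to $\|\zeta Tb\zeta\|_1\lesssim\|f\|_1$, uses the same cancellation reductions you found to rewrite $\zeta Tb\zeta=\sum_n\sum_{k<n}\nu_k\,\zeta M_kb_n\zeta$, and then drops $\zeta$ at the $L_1$ level: $\|\zeta Tb\zeta\|_1\le\sum_n\sum_{k<n}\|M_kb_n\|_1$. The key point (Lemma~\ref{bad estimate}) is that the mean-zero property of $b_n$ over cubes in $\Q_n$ forces $M_kb_n(x)$ to be supported on those $Q\in\Q_n$ meeting the sphere $x+\partial B_k$, a set of measure $\lesssim 2^{-k(d-1)}2^{-n}$. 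Combining this geometric gain with a H\"older/Lemma~\ref{mainlemma} splitting (using $q_Qf_Qq_Q\le\lambda q_Q$) gives $\sum_{k<n}\|M_kb_n\|_1\lesssim(\lambda\varphi(p_n))^{1/2}(\varphi(fp_n))^{1/2}$, which Cauchy--Schwarz in $n$ turns into $\|f\|_1$. The $L_2$ method you attempted \emph{can} be made to work for the diagonal part $b_d=\sum_np_n(f-f_n)p_n$ (see Appendix~B), but it requires keeping the summation structure inside the norm and exploiting $p_nf_np_n\lesssim\lambda p_n$ --- not a black-box appeal to Lemma~\ref{t1} --- and even then the off-diagonal terms $p_n(f-f_n)q_n$ resist this treatment.
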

To prove Proposition \ref{bad function}, by noting that $m_{\lambda}(f)=0$, we have $p_n=0$ for all $n\leq 0$. Thus we can express $b$ as $b=\sum_{n=1}^{\infty}b_{n}$, where $b_{n}=p_n (f-f_{n}) q_n+q_{n-1}(f-f_{n})p_n$ as in (\ref{czd6}).
\begin{lem}\label{bad estimate}\rm
Fix $n\geq1$, we have 
\begin{align}\label{bad342212237289}
\sum_{k:k< n}\|M_{k}b_{n}\|_{1}\lesssim\Big(\lambda\varphi(p_{n})\Big)^{\frac{1}{2}}\Big(\varphi(fp_{n})\Big)^{\frac{1}{2}}.
\end{align}
%where $b_{n}=p_n (f-f_{n}) q_n+q_{n-1}(f-f_{n})p_n$ as in (\ref{czd6}).
\end{lem}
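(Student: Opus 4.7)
My approach combines the mean-zero property $\int_Q b_n = 0$ for $Q\in\Q_n$ (which is nontrivial here because $M_k$ for $k<n$ averages over balls larger than these cubes) with the noncommutative Cauchy-Schwarz of Lemma~\ref{mainlemma}. The first step is to decompose $b_n = \sum_{Q\in\Q_n}\chi_Q b_n$ and subtract the kernel at the cube center:
$$M_k(\chi_Q b_n)(x) = \int_Q b_n(y)\bigl[\kappa_k(x-y) - \kappa_k(x-c_Q)\bigr]\,dy,$$
where $\kappa_k = \chi_{B_k}/|B_k|$ and $c_Q$ is the center of $Q$. Since $\kappa_k$ is the normalized indicator of a ball of radius $2^{-k}$, this difference vanishes unless $y$ and $c_Q$ lie on opposite sides of $\partial B_k(x)$, which forces $x$ into an annulus $A_{k,Q}$ of thickness $\sim 2^{-n}$ at radius $2^{-k}$; in particular $|A_{k,Q}|\lesssim 2^{-k(d-1)}\cdot 2^{-n}$, and on $A_{k,Q}$ the integrand reduces to $\pm|B_k|^{-1}\int_{Q\setminus B_k(x)} b_n(y)\,dy$.

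Second, I would use the commutation $q_{n-1}f_n q_{n-1} = q_n f_n q_n + p_n f_n p_n$ (a direct consequence of Lemma~\ref{cucu}(i)) to simplify $b_n$: it gives $p_n f_n q_n = 0$ and $q_{n-1}f_n p_n = p_n f_n p_n$, so
$$b_n = p_n f q_n + q_n f p_n + p_n(f-f_n)p_n.$$
The diagonal term contributes $\|p_n(f-f_n)p_n\|_1 \lesssim \lambda\varphi(p_n)$ via $\varphi(p_n f p_n) = \varphi(p_n f_n p_n)\le 2^d\lambda\varphi(p_n)$ and (\ref{czd1123}). For the off-diagonal part $p_n f q_n$, I would factorize $p_Q f q_Q = (f^{1/2}p_Q)^*(f^{1/2}q_Q)$ pointwise on each $Q$ and apply Lemma~\ref{mainlemma} in $\M$, then integrate in $x\in A_{k,Q}$ (using Fubini to swap $x$ and $y$ integrations and Cauchy-Schwarz in $x$), sum over $Q\in\Q_n$ by Cauchy-Schwarz in the discrete index, and finally sum over $k<n$ using the geometric factor $\sum_{k<n}2^{k-n}\lesssim 1$. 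The Cuculescu bound $p_Q f_Q p_Q\le 2^d\lambda p_Q$ together with the identity $\sum_Q|Q|\tau(p_Q f_Q p_Q) = \varphi(fp_n)$ is what ultimately yields the factor $(\lambda\varphi(p_n))^{1/2}$.

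The main obstacle is the second Cauchy-Schwarz factor: a naive application on the $q_n$-side would give $\varphi(q_n f q_n)^{1/2}\le\|f\|_1^{1/2}$, which destroys the summability over $n$. The target form is tuned precisely so that $\sum_n(\lambda\varphi(p_n))^{1/2}\varphi(fp_n)^{1/2}\le\|f\|_1$ by Cauchy-Schwarz in $n$, using $\sum_n\varphi(p_n) = \varphi(\1_{\mathcal N}-q)\le\|f\|_1/\lambda$ (Lemma~\ref{cucu}(iii)) and $\sum_n\varphi(fp_n)\le\|f\|_1$. To recover $\varphi(fp_n)^{1/2}$ as the second factor, I expect one must handle the two off-diagonal pieces $p_n f q_n$ and $q_n f p_n$ jointly and exploit the decomposition $q_{\widehat Q}f_Q q_{\widehat Q} = q_Q f_Q q_Q + p_Q f_Q p_Q$ on the annulus so that the $q$-contribution collapses into a $p$-contribution, consistent with the construction of the CZ decomposition in Theorem~\ref{czdecom}.
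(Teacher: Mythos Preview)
Your geometric setup (boundary annuli, the $2^{k-n}$ gain, Cauchy--Schwarz over $Q\in\Q_n$) is correct and matches the paper, and your treatment of the diagonal piece $p_n(f-f_n)p_n$ is fine. But the gap you yourself flag for the off-diagonal term $p_nfq_n$ is real and is not closed by your suggested fix: the block identity $q_{\widehat Q}f_Qq_{\widehat Q}=q_Qf_Qq_Q+p_Qf_Qp_Q$ concerns the \emph{average} $f_Q$, whereas the problematic object is $\int_{(x+B_k)\cap Q}p_Qf(y)q_Q\,dy$, involving $f$ itself. There is no algebraic cancellation between $p_nfq_n$ and $q_nfp_n$ that collapses a $q$-contribution into a $p$-contribution here.

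The missing idea is to apply Lemma~\ref{mainlemma} with exponents $(r,p,q)=(1,1,\infty)$ rather than the Cauchy--Schwarz split $(1,2,2)$, placing the $q_Q$-factor (or $q_{\widehat Q}$-factor) on the $L_\infty$ side. After enlarging the integration domain from $(x+B_k)\cap Q$ to $Q$ (harmless since $p_Qfp_Q$ and $q_Qfq_Q$ are positive), the $q$-side becomes
\[
\Big\|\Big(\int_Q q_Qfq_Q\Big)^{1/2}\Big\|_{\infty}=\big(|Q|\,\|q_Qf_Qq_Q\|_\infty\big)^{1/2}\le(\lambda|Q|)^{1/2}
\]
by the Cuculescu bound \eqref{czd5}, while the $p$-side is $\|\int_Qp_Qfp_Q\|_{1/2}^{1/2}\le\tau(p_Q)^{1/2}\varphi(fp_Q\chi_Q)^{1/2}$ by H\"older. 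This gives the cube-wise bound $(\lambda|Q|\,\tau(p_Q)\,\varphi(fp_Q\chi_Q))^{1/2}$ independently of $x$; combined with your annulus measure $|A_{k,Q}|\lesssim 2^{-k(d-1)-n}$, the sum over $k<n$ and then Cauchy--Schwarz in $Q$ produce exactly $(\lambda\varphi(p_n))^{1/2}(\varphi(fp_n))^{1/2}$. In short, the Cuculescu input you need is the $L_\infty$ bound $q_Qf_Qq_Q\le\lambda$ on the $q$-side, not the bound on $p_Qf_Qp_Q$.
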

\begin{proof}
By the cancellation property of $b_{n}$ stated in Theorem \ref{czdecom} (iii), it is easy to verify that
\begin{align}\label{bad3422237289}
M_kb_{n}(x)=\frac{1}{|B_{k}|}\int_{x+B_{k}}b_{n}(y)dy=\frac{1}{|B_{k}|}\sum_{{\begin{subarray}{c}
Q\in\Q_{n} \\ (x+\partial B_{k})\cap Q\neq \emptyset
\end{subarray}}}\int_{(x+B_{k})\cap Q}b_{n}(y)dy,
\end{align}
%where $M_{k,n}$ was defined in \eqref{mkn}.
where $\partial B$ means the boundary of the ball $B$. On the other hand, we claim that
\begin{align}\label{bad34237289}
b_{n}=p_n f q_n+q_{n-1}fp_n-q_{n-1}f_{n}p_n.
\end{align}
Indeed, by the definition of $p_{n}$-(\ref{czd1}), $p_{n}=q_{n-1}-q_{n}\leq q_{n-1}$. Hence, the conclusion (i) in Lemma \ref{cucu} gives
$$p_n f_{n} q_n=p_nq_{n-1} f_{n} q_{n-1}q_n=p_nq_nq_{n-1} f_{n} q_{n-1}=0.$$
This gives the desired expression.

In the following, we set
\begin{eqnarray*}
\|F_{1}\|_{1}& \triangleq &\sum_{k:k<n}\int_{\R^{d}}\Big\|\frac{1}{|B_{k}|}\sum_{{\begin{subarray}{c}
Q\in\Q_{n} \\ (x+\partial B_{k})\cap Q\neq \emptyset
\end{subarray}}}\int_{(x+B_{k})\cap Q}(p_n f q_n)(y)dy\Big\|_{L_{1}(\M)}dx,\\
\|F_{2}\|_{1}& \triangleq &\sum_{k:k<n}\int_{\R^{d}}\Big\|\frac{1}{|B_{k}|}\sum_{{\begin{subarray}{c}
Q\in\Q_{n} \\ (x+\partial B_{k})\cap Q\neq \emptyset
\end{subarray}}}\int_{(x+B_{k})\cap Q}(q_{n-1}fp_n)(y)dy\Big\|_{L_{1}(\M)}dx, \\
\|F_{3}\|_{1}& \triangleq &\sum_{k:k<n}\int_{\R^{d}}\Big\|\frac{1}{|B_{k}|}\sum_{{\begin{subarray}{c}
Q\in\Q_{n} \\ (x+\partial B_{k})\cap Q\neq \emptyset
\end{subarray}}}\int_{(x+B_{k})\cap Q}(q_{n-1}f_{n}p_n)(y)dy\Big\|_{L_{1}(\M)}dx.
\end{eqnarray*}
Then by observations (\ref{bad34237289}), (\ref{bad3422237289}) and the Minkowski inequality, we conclude that to obtain the desired inequality (\ref{bad342212237289}), it suffices to show
$$\|F\|_{1}\triangleq\max\Big\{\|F_{1}\|_{1},\|F_{2}\|_{1},\|F_{3}\|_{1}\Big\}\lesssim\Big(\lambda\varphi(p_{n})\Big)^{\frac{1}{2}}\Big(\varphi(fp_{n})\Big)^{\frac{1}{2}}.$$
%We first consider $\|F_{1}\|_{1}$.
%By the Minkowski inequality, we have
%$$\|F_{1}\|_{1}\lesssim\sum_{k:k<n}2^{kd}\int_{\R^{d}}\sum_{{\begin{subarray}{c}
%Q\in\Q_{n} \\ (x+\partial B_{k})\cap Q\neq \emptyset
%\end{subarray}}}\Big\|\int_{(x+B_{k})\cap Q}p_Q f(y) q_Qdy\Big\|_{L_{1}(\M)}dx.$$
%Now we use the Minkowski inequality once more to obtain that
%$$\Big\|\frac{1}{|B_{k}|}\int_{\mathcal{I}(x+B_k,n)}p_{n}fq_{n}(y)dy\Big\|_{L_{1}(\M)}\lesssim2^{kd}\sum_{{\begin{subarray}{c}
%Q\in\Q_{n} \\ x+\partial B_{k}\cap Q\neq \emptyset
%\end{subarray}}}\Big\|\int_{x+B_k\cap Q}p_{n}fq_{n}(y)dy\Big\|_{L_{1}(\M)}.$$
Firstly, we use Lemma \ref{mainlemma} and (\ref{czd1}) to get 
\begin{align}\label{diff1}
\begin{split}
&\quad\Big\|\int_{(x+B_k)\cap Q}p_Q f(y) q_Qdy\Big\|_{L_{1}(\M)}\\
 & \leq\Big\|\Big(\int_{(x+B_k)\cap Q}p_Qf(x)p_Qdx\Big)^{\frac{1}{2}}\Big\|_{L_{1}(\M)}
\Big\|\Big(\int_{(x+B_k)\cap Q}q_Qf(x)q_Qdx\Big)^{\frac{1}{2}}\Big\|_{L_{\infty}(\M)}\\& \leq\Big\|\Big(\int_{ Q}p_Qf(x)p_Qdx\Big)^{\frac{1}{2}}\Big\|_{L_{1}(\M)}
\Big\|\Big(\int_{Q}q_Qf(x)q_Qdx\Big)^{\frac{1}{2}}\Big\|_{L_{\infty}(\M)}\\
& =\Big\|\int_{ Q}p_Qf(x)p_Qdx\Big\|^{\frac{1}{2}}_{L_{\frac{1}{2}}(\M)}
\Big\|\int_{Q}q_Qf(x)q_Qdx\Big\|^{\frac{1}{2}}_{L_{\infty}(\M)}.
\end{split}
\end{align}
Then applying the H\"{o}lder inequality to the first term and property (\ref{czd5}) to the second term above, we obtain
\begin{align*}
\hskip1pt \Big\|\int_{(x+B_k)\cap Q}p_Q f(y) q_Qdy\Big\|_{L_{1}(\M)}
\leq&\ \|p_{Q}\|^{\frac{1}{2}}_{L_{1}(\M)}
\Big\|\int_{Q}p_Qf(x)p_Qdx\Big\|^{\frac{1}{2}}_{L_{1}(\M)}(\lambda|Q|)^{\frac{1}{2}}\\
=&\ (\varphi(fp_{Q}\chi_{Q})\tau(p_{Q})\lambda|Q|)^{\frac{1}{2}}.
\end{align*}
The similar argument as in (\ref{diff1}) implies
\begin{align*}
\hskip1pt \Big\|\int_{(x+B_k)\cap Q}q_{\widehat{Q}}f(y)p_{Q}dy\Big\|_{L_{1}(\M)}
\leq&\ \Big\|\int_{ Q}p_Qf(x)p_Qdx\Big\|^{\frac{1}{2}}_{L_{\frac{1}{2}}(\M)}
\Big\|\int_{Q}q_{\widehat{Q}}f(x)q_{\widehat{Q}}dx\Big\|^{\frac{1}{2}}_{L_{\infty}(\M)}\\
\lesssim&\ (\varphi(fp_{Q}\chi_{Q})\tau(p_{Q})\lambda|Q|)^{\frac{1}{2}},
\end{align*}
where in the second inequality, we used the following basic fact $$\|q_{n-1}f_{n}q_{n-1}\|_{\infty}\leq2^{d}\|q_{n-1}f_{n-1}q_{n-1}\|_{\infty}\leq2^{d}\lambda.$$
Likewise, combining with the trace preserving property $\varphi(f_{n}p_{n})=\varphi(fp_{n})$, we see that
$$\Big\|\int_{(x+B_k)\cap Q}q_{\widehat{Q}}f_{n}(y)p_{Q}dy\Big\|_{L_{1}(\M)}
\lesssim(\varphi(fp_{Q}\chi_{Q})\tau(p_{Q})\lambda|Q|)^{\frac{1}{2}}.$$
Putting these estimates together, using the Minkowski inequality
%$$\|F\|_{1}\lesssim\sum_{k:k<n}2^{kd}\int_{\R^{d}}\sum_{{\begin{subarray}{c}
%Q\in\Q_{n} \\ (x+\partial B_{k})\cap Q\neq \emptyset
%\end{subarray}}}\Big\|\int_{(x+B_{k})\cap Q}p_Q f(y) q_Qdy\Big\|_{L_{1}(\M)}dx.$$
and the Fubini theorem, we find that
\begin{align*}
\hskip1pt \|F\|_{1}
\lesssim&\ \hskip1pt \sum_{k:k<n}2^{kd}\int_{\R^{d}}\sum_{{\begin{subarray}{c}
Q\in\Q_{n} \\ (x+\partial B_{k})\cap Q\neq \emptyset
\end{subarray}}}(\varphi(fp_{Q}\chi_{Q})\tau(p_{Q})\lambda|Q|)^{\frac{1}{2}}dx\\
=&\ \sum_{k:k<n}2^{kd}\sum_{Q\in\Q_{n}}
(\varphi(fp_{Q}\chi_{Q})\tau(p_{Q})\lambda|Q|)^{\frac{1}{2}}\int_{\R^{d}}\chi_{(x+\partial B_{k})\cap Q\neq \emptyset}dx.
\end{align*}
Note that
$$\int_{\R^{d}}\chi_{(x+\partial B_{k})\cap Q\neq \emptyset}dx\lesssim2^{-k(d-1)}2^{-n}.$$
Then by above estimate, the Fubini theorem and the Cauchy-Schwarz inequality, we finally deduce that
\begin{align*}
\|F\|_{1}
\lesssim&\ \hskip1pt \sum_{k:k<n}\sum_{Q\in\Q_{n}}2^{k-n}(\varphi(fp_{Q}\chi_{Q})\tau(p_{Q})\lambda|Q|)^{\frac{1}{2}}\\
\lesssim&\ \sum_{Q\in\Q_{n}}(\varphi(fp_{Q}\chi_{Q})\tau(p_{Q})\lambda|Q|)^{\frac{1}{2}}\\
\leq&\ \Big(\sum_{
Q\in\Q_{n}}\tau(p_{Q})
|Q|\lambda\Big)^{\frac{1}{2}}\Big(\sum_{
Q\in\Q_{n}}\varphi(fp_{Q} \chi_{Q})\Big)^{\frac{1}{2}}\\
=&\ \Big(\lambda\varphi(p_{n})\Big)^{\frac{1}{2}}\Big(\varphi(fp_{n})\Big)^{\frac{1}{2}}.
\end{align*}
%The same arguments work also for other two terms $\|F_{2}\|_{1}$ and $\|F_{3}\|_{1}$. Indeed, when dealing with the analogy of the last equation in (\ref{diff1}), we can use the properties $\varphi(f_{n}p_{n})=\varphi(fp_{n})$ and $$\|q_{n-1}f_{n}q_{n-1}\|_{\infty}\leq2^{d}\|q_{n-1}f_{n-1}q_{n-1}\|_{\infty}\leq2^{d}\lambda.$$
Thus we finish the proof of the lemma.
\end{proof}
%To show (\ref{bd}), we need to do some preparation. Let $n\in\mathbb Z$ and $B\subset\R^{d}$ be a Euclidean ball, we define
%\begin{eqnarray}\label{C18}
%\mathcal{I}(B,n)=\bigcup_{{\begin{subarray}{c}
%Q\in\Q_{n} \\ \partial B\cap Q\neq \emptyset
%\end{subarray}}} Q\cap B,
%\end{eqnarray}
%where $\partial B$ means the boundary of the ball $B$.
%Then for integer $k<n$ and $1\leq p\leq\infty$, the linear operator $M_{k,n}: L_p(\mathcal N)\rightarrow L_p(\mathcal N)$ is defined as
%\begin{align}\label{mkn}
%M_{k,n}h(x)=\frac{1}{|B_k|}\int_{\mathcal{I}(x+B_k,n)}h(y)dy.
%\end{align}
%Now we are at a position to show estimate \eqref{bd}.
Now we are ready to prove Proposition \ref{bad function}.
\begin{proof}[Proof of Proposition \ref{bad function}.]
Note that the Chebychev inequality implies that
$$\varphi \big(\chi_{(\lambda/8,\infty)}(|\zeta Tb\zeta|)\big)\lesssim\frac{\|\zeta Tb\zeta\|_{1}}{\lambda}.$$
Hence, it is enough to show
\begin{eqnarray}\label{9}
\|\zeta Tb\zeta\|_{1}\lesssim\|f\|_{1}.
\end{eqnarray}
To estimate (\ref{9}), applying the Minkowski inequality and since $\|(\nu_{k})\|_{\ell_\infty}\leq1$, we get
\begin{eqnarray*}\label{333}
\|\zeta Tb\zeta\|_{1}\leq
\sum_{n=1}^{\infty}\sum_{k}
\|\zeta\nu_{k}(M_k-\mathsf{E}_{k}) \hskip-1pt b_{n}\zeta
\|_{1}\lesssim\sum_{n=1}^{\infty}\sum_{k}
\|\zeta(M_k-\mathsf{E}_{k}) \hskip-1pt b_{n}\zeta
\|_{1}.
\end{eqnarray*}
In the following, we claim that
\begin{eqnarray}\label{diff}
\sum_{n=1}^{\infty}\sum_{k}\|\zeta(M_k-\mathsf{E}_{k}) \hskip-1pt b_{n}\zeta
\|_{1}\leq\sum_{n=1}^{\infty}\sum_{k:k<n}\|M_k b_{n}\|_{1}.
\end{eqnarray}
To see this, note that for  $k\geq n$, $\zeta(M_k-\mathsf{E}_{k}) b_{n}\zeta=0$. Indeed, fix one $x\in\mathbb R^d$. By the cancellation property announced in Theorem \ref{czdecom} (iii), one has
\begin{align}\label{bad34789}
\zeta(x)M_{k}b_{n}(x)\zeta(x)=\zeta(x)\frac1{|B_{k}|}\int_{x+B_{k}}b_n(y)\1_{y\notin 5Q_{x,n}}dy\zeta(x)=0,
\end{align}
since $x+B_{k}\subset 5Q_{x,n}$; similarly
$$
\zeta(x)\mathsf{E}_k b_{n}(x)\zeta(x)=\zeta(x)\frac{1}{|Q_{x,k}|} \int_{Q_{x,k}}b_{n}(y)\1_{y\notin 5Q_{x,n}}dy\zeta(x)=0,
$$
since $Q_{x,k}\subset Q_{x,n}$. On the other hand, for $k< n$, by applying the property of conditional expectation: $\mathsf{E}_k b_{n}=\mathsf{E}_k\mathsf{E}_nb_{n}=0$. Finally, since $\zeta$ is a projection in $\mathcal{N}$, we  establish  claim (\ref{diff}).

Therefore, by (\ref{diff}), we are reduced to showing
\begin{align}\label{bad347289}
\sum_{n=1}^{\infty}\sum_{k:k<n}\|M_k b_{n}\|_{1}\lesssim\|f\|_{1}.
\end{align}
However, the estimate (\ref{bad347289}) can be obtained by Lemma \ref{bad estimate}. Indeed,  using the Cauchy-Schwarz inequality, we have
\begin{align*}
  \sum_{n=1}^{\infty}\sum_{k:k<n}\|M_k b_{n}\|_{1}
  & \lesssim\Big(\sum_{n=1}^{\infty}\lambda\varphi(p_{n})\Big)^{\frac{1}{2}}
  \Big(\sum_{n=1}^{\infty}\varphi(fp_{n})\Big)^{\frac{1}{2}}\\
& =  \Big(\lambda\varphi(1-q)\Big)^{\frac{1}{2}}
  \Big(\varphi(f(1-q))\Big)^{\frac{1}{2}}\lesssim\|f\|_{1},
\end{align*}
where the last inequality follows from conclusion (iii) stated in Lemma \ref{cucu}.
This completes the argument for $Tb$.
\end{proof}
\begin{rk}\label{clever}\rm
What we need to point out is that the idea in proving Proposition \ref{bad function} is  partially inspired by Cadilhac's note \cite{C2}.
\end{rk}
%\begin{rk}\rm
%It is also worth pointing out that we wish to give a weak type $(1,1)$ proof based on $L_{2}$-estimate for the bad function, since we believe that the approach by applying $L_{2}$-estimate should be more powerful and has further applications, see for instance \cite{Christ, Seeger} in the commutative case and \cite{HX} in the operator-valued setting, see Appendix B for more details.
%Therefore, from this perspective, we will provide a new proof of the diagonal part of bad function $b$ in Appendix B, where the $TT^{*}$-method plays the key role.
%\end{rk}
\subsection{Conclusion}
Combining the weak type $(1,1)$ estimate of $T$ and Lemma \ref{t1}, we get the strong type $(p,p)$ $(1<p<2)$ estimate of $T$ by real interpolation \cite{P2}. The strong type $(p,p)$ $(2<p<\infty)$ estimate of $T$ can be deduced by duality. Therefore, we finish the proof of conclusion (i) and (ii) announced in Theorem \ref{t5}.

\bigskip

At the end of this section, we are at a position to prove Corollary \ref{t7}.
\begin{proof}[Proof of Corollary \ref{t7}.]
This is an immediate consequence of Theorem \ref{t5}. Indeed, 
\begin{align*}
Df(x)& =\sum_{k}\nu_{k}\Big((M_{k}-\mathsf{E}_{k})f(x)+(\mathsf{E}_{k}-\mathsf{E}_{k-1})f(x)+(\mathsf{E}_{k-1}-M_{k-1})f(x)\Big)\\
& =\sum_{k}\nu_{k}(M_{k}-\mathsf{E}_{k})f(x)+\sum_{k}\nu_{k}(\mathsf{E}_{k}-\mathsf{E}_{k-1})f(x)
+\sum_{k}\nu_{k}(\mathsf{E}_{k-1}-M_{k-1})f(x).
\end{align*}
The first and the third terms in the final expression above can be handled by conclusion (i) and (ii) in Theorem \ref{t5}, while the middle term we refer to \cite[Corollary 3.8]{Ran02} (see also \cite[Theorem 3.3.2]{P2}). This completes the proof of Corollary \ref{t7}.
\end{proof}
\section{Proof of Theorem \ref{t5}: $(L_{\infty},\mathrm{BMO})$ estimate}
In this section, we examine the $(L_{\infty},\mathrm{BMO})$ estimate announced in Theorem \ref{t5}. We first recall the definition of noncommutative dyadic $\mathrm{BMO}$ spaces. According to \cite{M}, the space $\mathrm{BMO}_{d}$ is defined as the closure of functions $f$ in $L_{1,loc}(\R^d;\M)$ with
$$\|f\|_{\mathrm{BMO}_{d}(\mathcal{N})} =\max \Big\{
\|f\|_{\mathrm{BMO}^{r}_{d}(\mathcal{N})}, \|f\|_{\mathrm{BMO}^{c}_{d}(\mathcal{N})}
\Big\}<\infty,$$ where the row and column dyadic $\mathrm{BMO}_{d}$
norms are given by
\begin{eqnarray*}
\|f\|_{\mathrm{BMO}^{r}_{d}(\mathcal{N})} & = & \sup_{Q \in \Q} \Big\| \Big(
\frac{1}{|Q|} \int_Q \Big|\big(f(x) -f_Q\big)^{\ast}\Big|^2 \, dx \Big)^{\frac12} \Big\|_{\M}, \\
\|f\|_{\mathrm{BMO}^{c}_{d}(\mathcal{N})} & = & \sup_{Q \in \Q} \Big\| \Big(
\frac{1}{|Q|} \int_Q \Big|f(x) -f_Q\Big|^2 \, dx \Big)^{\frac12} \Big\|_{\M}.
\end{eqnarray*}
%As usual, in the definition of the $\mathrm{BMO}$ norm of a function
%$f$, we may replace $f_Q$ by any other operator
%$\alpha_Q$ depending on $Q$ (see e.g. \cite[Lemma 1.3]{JMP}).
From the definition of $\mathrm{BMO}$ spaces, we are reduced to showing
\begin{align}\label{Bmo1}
\|Tf\|_{\mathrm{BMO}^{r}_{d}(\mathcal{N})}\lesssim\|f\|_{\infty}
\end{align}
and
\begin{align}\label{Bmo}
\|Tf\|_{\mathrm{BMO}^{c}_{d}(\mathcal{N})}\lesssim\|f\|_{\infty}.
\end{align}

However, it suffices to estimate the column case-(\ref{Bmo}). Indeed, we assume (\ref{Bmo}). Using the fact that $\|g\|_{\mathrm{BMO}^{c}_{d}(\mathcal{N})}=\|g^{*}\|_{\mathrm{BMO}^{r}_{d}(\mathcal{N})}$ and taking the adjoint of both side in (\ref{Bmo1}), we then have
$$\|Tf\|_{\mathrm{BMO}^{r}_{d}(\mathcal{N})}=\|(Tf)^{*}\|_{\mathrm{BMO}^{c}_{d}(\mathcal{N})}=
\|Tf^{*}\|_{\mathrm{BMO}^{c}_{d}(\mathcal{N})}\lesssim\|f^{*}\|_{\infty}=\|f\|_{\infty},$$
which is the desired estimate (\ref{Bmo1}).

Furthermore, to estimate (\ref{Bmo}), we will use the following fact. Let $\alpha_Q$ be any operator
depending on $Q$ . If
$$\sup_{Q \in \Q} \Big\| \Big(
\frac{1}{|Q|} \int_Q \Big|Tf(x) -\alpha_Q\Big|^2 \, dx \Big)^{\frac12} \Big\|_{\M}\lesssim\|f\|_{\infty},$$
then (\ref{Bmo}) holds (see e.g. \cite{MP}). %Indeed, by operator convexity inequality of square function $x\mapsto|x|^{2}$, we have
%\begin{align*}
%	\frac{1}{|Q|} \int_Q|Tf(x)-(Tf)_Q|^2dx	&\ \leq2\frac{1}{|Q|} \int_Q|Tf(x)-\alpha_Q|^2dx+2\frac{1}{|Q|} \int_Q|(Tf)_Q-\alpha_Q|^2dx\\
%	&\ = 2\frac{1}{|Q|} \int_Q|Tf(x)-\alpha_Q|^2dx+2\Big|\frac{1}{|Q|} \int_Q(Tf(x)-\alpha_Q)dx\Big|^2\\
%	&\ \lesssim \frac{1}{|Q|} \int_Q|Tf(x)-\alpha_Q|^2dx,
%\end{align*}
%which implies (\ref{Bmo}).

Now we are going to prove estimate (\ref{Bmo}).
\begin{proof}[proof of (\ref{Bmo}).]
Let $f\in L_{\infty}(\mathcal N)$ and $Q$ be a dyadic cube with center $c_Q$. Decompose $f$ as
$f=f\chi_{ 3Q}+f\chi_{\R^d\setminus 3Q}\triangleq f_1+f_2$.
If we set $\alpha=Tf_{2}(c_Q)$, then
$$Tf(x)-\alpha=Tf_{1}(x)+Tf_{2}(x)-Tf_{2}(c_Q).$$
Thus by the operator convexity inequality of $x\mapsto|x|^{2}$, 
$$\frac{1}{|Q|}\int_{Q}|Tf(x)-\alpha|^{2}dx\leq2(A+B),$$
where
\begin{eqnarray*}
A & = & \frac{1}{|Q|}\int_{Q}|Tf_{1}(x)|^{2}dx, \\
B & = & \frac{1}{|Q|}\int_{Q}|Tf_{2}(x)-Tf_{2}(c_Q)|^{2}dx.
\end{eqnarray*}
The first term $A$ is easy to handle. Indeed, by duality and Lemma \ref{t1},
\begin{align*}
\|A\|_{\M}& =\Big\|\frac{1}{|Q|}\int_{Q}|Tf_{1}(x)|^{2}dx\Big\|_{\M}\\
& \thickapprox\frac{1}{|Q|} \sup_{{\begin{subarray}{c}
\|a\|_{L_{1}(\M)}\leq1 \\ a\geq0
\end{subarray}}}\tau\Big(a\int_Q
|Tf_{1}(x)|^{2}dx\Big)\\
%&=\frac{1}{|Q|} \sup_{{\begin{subarray}{c}
%\|a\|_{L_{1}(\M)}\leq1 \\ a\geq0
%\end{subarray}}}\tau\Big(a^{\frac{1}{2}}\int_Q
%|Tf_{1}(x)|^{2}dxa^{\frac{1}{2}}\Big)\\
& =\frac{1}{|Q|} \sup_{{\begin{subarray}{c}
\|a\|_{L_{1}(\M)}\leq1 \\ a\geq0
\end{subarray}}}\tau\Big(\int_Q
|Tf_{1}(x)a^{\frac{1}{2}}|^{2}dx\Big)\\
& =\frac{1}{|Q|} \sup_{{\begin{subarray}{c}
\|a\|_{L_{1}(\M)}\leq1 \\ a\geq0
\end{subarray}}}\int_Q
\|Tf_{1}(x)a^{\frac{1}{2}}\|_{L_{2}(\M)}^{2}dx\\
& \leq \frac{1}{|Q|} \sup_{{\begin{subarray}{c}
\|a\|_{L_{1}(\M)}\leq1 \\ a\geq0
\end{subarray}}}\int_{\R^{d}}
\|Tf_{1}(x)a^{\frac{1}{2}}\|_{L_{2}(\M)}^{2}dx\\
& = \frac{1}{|Q|} \sup_{{\begin{subarray}{c}
\|a\|_{L_{1}(\M)}\leq1 \\ a\geq0
\end{subarray}}}\|T(f\chi_{3Q}a^{\frac{1}{2}})\|_{2}^{2}\\
&\lesssim\frac{1}{|Q|} \sup_{{\begin{subarray}{c}
\|a\|_{L_{1}(\M)}\leq1 \\ a\geq0
\end{subarray}}}\|f\chi_{3Q}a^{\frac{1}{2}}\|_{2}^{2}\lesssim\|f\|^{2}_{\infty},
\end{align*}
which is the desired estimate.

Now we turn to the second term $B$. Note that
\begin{align*}
Tf_{2}(x)-Tf_{2}(c_Q)
&\ =\sum_{k}\nu_{k}\big(M_{k}f_{2}(x)-\mathsf{E}_{k}f_{2}(x)-
(M_{k}f_{2}(c_{Q})-\mathsf{E}_{k}f_{2}(c_{Q}))\big)\\
&\ \triangleq\sum_{k}F_{k,Q}(x).
\end{align*}
For any $k$ satisfying $2^{-k}<\ell(Q)$, $F_{k,Q}(x)=0$ if $x\in Q$. Indeed, since $2^{-k}<\ell(Q)$ and $f_2$ is supported in $\R^d\setminus 3Q$, a simple geometric observation shows that both $\mathsf{E}_k f_2$ and $M_{k}f_2$ are supported in $\R^d\setminus Q$. Hence, in this case, for any $x\in Q$, $F_{k,Q}(x)=0$. On the other hand, for $k$ satisfying $2^{-k}\geq\ell(Q)$, we claim that for any $x\in Q$,
\begin{eqnarray}\label{97}
\|F_{k,Q}(x)\|_{\M}\lesssim2^{k}\ell(Q)\|f\|_{\infty}.
\end{eqnarray}
To see this, since $x$ and $c_Q$ are in the same cube in $\Q_{k}$, $\mathsf{E}_kf_2(x)=\mathsf{E}_k f_2(c_Q)$. Therefore,
\begin{align*}
\|F_{k,Q}(x)\|_{\M} &\leq\frac{1}{|B_{k}|} \Big\|\int_{x+B_{k}}f_2(y)dy -\int_{c_Q+B_{k}}f_2(y)dy\Big\|_{\M} \\
&\approx2^{kd} \Big\|\int_{\R^d}f_2(y)( \chi_{ (c_Q+B_{k})\setminus  (x+B_{k})}-\chi_{(x+B_{k})\setminus (c_Q+B_{k})})(y)dy\Big\|_{\M} \\
&\leq2^{kd}|  (c_Q+B_{k}) \Delta  (x+B_{k}) |\cdot \|f\| _{\infty}.
\end{align*}
Furthermore, the fact that $|(c_Q+B_{k}) \Delta (x+B_{k}) |\lesssim 2^{-k(d-1)}|x-c_Q|$ implies
$$\|F_{k,Q}(x)\|_{\M}\lesssim2^{k}|x-c_Q|\|f\|_{\infty}\lesssim 2^{k}\ell(Q)\|f\|_{\infty}.$$
This is precisely claim (\ref{97}).

Finally, with all these observations, we obtain
\begin{align*}
\|B\|_{\M}& =\Big\|\frac{1}{|Q|}\int_{Q}|\sum_{k}F_{k,Q}(x)|^{2}dx\Big\|_{\M}\\
& \leq\frac{1}{|Q|}\int_{Q}\Big(\sum_{k}\|F_{k,Q}(x)\|_{\M}\Big)^{2}\, dx\\
& \lesssim \ell(Q)^{2}\cdot  \|f\|^{2}_{\infty}\Big(\sum_{2^{-k} \geq\ell(Q)}2^{k}\Big)^{2}\lesssim \|f\|_\infty^{2}.
\end{align*}
This completes the $(L_{\infty},\mathrm{BMO})$ estimate.
\end{proof}
\section{Appendix A. Proof of Lemma \ref{t1}}
In this appendix, we prove Lemma \ref{t1}. As mentioned in the Introduction, the noncommutative $L_{2}$-boundedness of $T$ follows trivially from the corresponding commutative result. However, we provide a proof using similar idea as presented in \cite[Proposition 1]{JR02} (see also \cite[Lemma 3.13]{HX}) but not directly using the commutative result as a block box. It is this proof that inspires us to complete
the argument of the whole paper.

%provide a proof using similar idea as presented in \cite[Proposition 1]{JR02} (see also \cite[Lemma 3.13]{HX}) to complete the whole paper.
In the proof of Lemma \ref{t1}, we will need the following well-known result due to Cotlar, see e.g. \cite[Lemma 9.1]{Duo}.
\begin{Alemma}[Cotlar's Lemma]\label{Cotlar}\rm
Let $\mathcal{H}$ be a Hilbert space and let us consider a family
$(T_k)_{k \in \Z}$ of bounded operators on $\mathcal{H}$ with
finitely many non-zero $T_k$'s. Assume that there exists a
summable sequence $(\alpha_k)_{k \in \Z}$ such that
$$\max \Big\{ \big\| T_i^* T_j^{\null} \big\|_{\mathcal{B(H)}},
\big\| T_i^{\null} T_j^* \big\|_{\mathcal{B(H)}} \Big\} \, \le \,
\alpha_{i-j}^2$$ for all $i,j \in \Z$. Then 
$$\Big\| \sum_k T_k \Big\|_{\mathcal{B(H)}} \, \le \, \sum_k
\alpha_k.$$
\end{Alemma}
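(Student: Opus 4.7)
My proof plan is the classical Cotlar--Stein almost-orthogonality argument: control $\|T\|$ through high powers of $T^*T$, exploit the two hypotheses $\|T_i^*T_j\|\le\alpha_{i-j}^2$ and $\|T_iT_j^*\|\le\alpha_{i-j}^2$ by grouping the $2n$-fold product in two different ways, and take the geometric mean before summing. Setting $T=\sum_k T_k$ (a finite sum by hypothesis), the starting point is the identity $\|T\|^{2n}=\|(T^*T)^n\|$, valid for every positive integer $n$ because $T^*T$ is self-adjoint and positive. Expanding,
\[
(T^*T)^n=\sum_{k_1,\ldots,k_{2n}} T_{k_1}^* T_{k_2} T_{k_3}^* T_{k_4}\cdots T_{k_{2n-1}}^* T_{k_{2n}},
\]
so by the triangle inequality it suffices to estimate the norm of each monomial and then sum over the multi-indices.

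For each fixed multi-index, I would apply two different groupings. First, pairing adjacent factors as $(T_{k_1}^*T_{k_2})(T_{k_3}^*T_{k_4})\cdots(T_{k_{2n-1}}^*T_{k_{2n}})$ gives an upper bound of $\prod_{l=1}^n \alpha_{k_{2l-1}-k_{2l}}^2$. Second, leaving $T_{k_1}^*$ and $T_{k_{2n}}$ at the ends --- each of norm at most $\alpha_0$, since $\|T_k\|^2=\|T_k^*T_k\|\le\alpha_0^2$ --- and grouping the interior as $(T_{k_2}T_{k_3}^*)\cdots(T_{k_{2n-2}}T_{k_{2n-1}}^*)$ gives $\alpha_0^{\,2}\prod_{l=1}^{n-1}\alpha_{k_{2l}-k_{2l+1}}^2$. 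The geometric mean of these two bounds is $\alpha_0\prod_{m=1}^{2n-1}\alpha_{k_m-k_{m+1}}$, in which every consecutive gap $k_m-k_{m+1}$ appears exactly once.

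Summing this geometric-mean bound with $k_1$ fixed, the nearest-neighbor structure telescopes: $\sum_{k_{m+1}}\alpha_{k_m-k_{m+1}}=A$, where $A:=\sum_k\alpha_k$, so iterating $(2n-1)$ times produces $\alpha_0 A^{2n-1}$. Since only finitely many, say $N$, of the $T_k$ are nonzero, $k_1$ ranges over at most $N$ values and we obtain $\|T\|^{2n}\le N\alpha_0\,A^{2n-1}$. Taking $(2n)$-th roots and letting $n\to\infty$, the prefactor $(N\alpha_0)^{1/(2n)}$ tends to $1$ while $A^{(2n-1)/(2n)}\to A$, yielding the desired $\|T\|\le\sum_k\alpha_k$.

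The only non-obvious ingredient is the two-sided grouping trick and, really, it is the whole point of the argument: using only the $T_i^*T_j$ bound would leave half of the $2n$ gaps uncontrolled and the resulting sum would blow up in $n$, whereas alternating the two pairings under the geometric mean is precisely what forces each gap $k_m-k_{m+1}$ to appear exactly once so that the multi-sum factors into a single power of $A$. The finite-support hypothesis enters only through the benign $N^{1/(2n)}$ prefactor, and the bound obtained is independent of $N$, which is what makes Cotlar's Lemma a useful tool for passing to infinite sums by approximation.
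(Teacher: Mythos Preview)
Your argument is the standard Cotlar--Stein proof and is correct as written: the two groupings, the geometric-mean trick producing $\alpha_0\prod_{m=1}^{2n-1}\alpha_{k_m-k_{m+1}}$, the telescoping sum to $N\alpha_0 A^{2n-1}$, and the $2n$-th root limit are all handled cleanly. The paper does not supply its own proof of this lemma; it simply records the statement and cites \cite[Lemma~9.1]{Duo}, where precisely this argument appears.
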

Indeed, we will actually apply a variant of Cotlar's Lemma obtained by Carbery \cite{carbery}.
\begin{Alemma}[Carbery]\label{Carbery}\rm
Let $(Q_{k})_{k\in\Z}$ and $(R_{k})_{k\in\Z}$ be two sequences of operators on a Hilbert space $\mathcal {H}$. For some $\varepsilon>0$, 
assume that operators $(Q_{k})_{k\in\Z}$ and $(R_{k})_{k\in\Z}$ satisfy the following properties:
\begin{enumerate}[(1)]
	\item [(i)] $\sum_{j}Q_{j}=I$ (in the sense of strong operator topology);
	\item[(ii)] for each $j,k$, $\|Q_{j}^{\ast}Q_{k}\|_{\mathcal{B(H)}}+\|Q_{j}Q_{k}^{\ast}\|_{\mathcal{B(H)}}\lesssim 2^{-\varepsilon|j-k|}$;
	
	\item[(iii)] for each $j,k$, $\|R_{j}Q_{k}^{\ast}\|_{\mathcal{B(H)}}+\|R_{j}^{\ast}Q_{k}\|_{\mathcal{B(H)}}\lesssim 2^{-\varepsilon|j-k|}$;
	
	\item[(iv)] for each $j$, $\|R_{j}\|_{\mathcal{B(H)}}\lesssim1$ .
	\end{enumerate}
Then $(R_{j})$ satisfy the hypothesis of Cotlar's Lemma and so $\sum_j R_j$ is a bounded operator on $\mathcal {H}$.
%\begin{align}\label{101}
%\sum_{j}Q_{j}=I,
%\end{align}
%and for some $\varepsilon>0$
%\begin{align}\label{102}
%\|Q_{j}^{\ast}Q_{k}\|_{\mathcal{B(H)}}+\|Q_{j}Q_{k}^{\ast}\|_{\mathcal{B(H)}}\lesssim 2^{-\varepsilon|j-k|}
%\end{align}
%and
%\begin{align}\label{103}
%\|R_{j}Q_{k}^{\ast}\|_{\mathcal{B(H)}}+\|R_{j}^{\ast}Q_{k}\|_{\mathcal{B(H)}}\lesssim 2^{-\varepsilon|j-k|}.
%\end{align}
%If in addition, $\|R_{j}\|_{\mathcal{B(H)}}\leq C$ for each $j$, then the $R_{j}$'s also satisfy the hypothesis of Cotlar's lemma.
\end{Alemma}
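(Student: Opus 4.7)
The plan is to verify the almost-orthogonality hypothesis of Cotlar's lemma (Lemma~\ref{Cotlar}) for the family $(R_j)$: namely, to exhibit a summable sequence $(\alpha_k)_{k\in\Z}$ with
$\max\bigl\{\|R_i^{*}R_j\|_{\mathcal{B(H)}},\,\|R_i R_j^{*}\|_{\mathcal{B(H)}}\bigr\}\le \alpha_{i-j}^{2}$,
after which Lemma~\ref{Cotlar} will immediately deliver the boundedness of $\sum_k R_k$.

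The starting point is (i), which together with its adjoint gives $\sum_m Q_m = \sum_m Q_m^{*} = I$ in the strong operator topology. I will insert two copies of the identity to expand
$R_i R_j^{*}=R_i\bigl(\sum_m Q_m^{*}\bigr)\bigl(\sum_n Q_n\bigr) R_j^{*}=\sum_{m,n} R_i Q_m^{*} Q_n R_j^{*}$,
and analogously $R_i^{*}R_j=\sum_{m,n}R_i^{*}Q_m Q_n^{*}R_j$; absolute convergence of the double sum will be justified a posteriori by the norm bound derived below. Each summand admits two complementary estimates. Grouping as $(R_iQ_m^{*})(Q_nR_j^{*})$ and invoking (iii) (together with its adjoint) gives $\|R_iQ_m^{*}Q_nR_j^{*}\|\lesssim 2^{-\varepsilon(|i-m|+|j-n|)}$, while grouping as $R_i(Q_m^{*}Q_n)R_j^{*}$ and invoking (ii) and (iv) gives $\|R_iQ_m^{*}Q_nR_j^{*}\|\lesssim 2^{-\varepsilon|m-n|}$. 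Taking the geometric mean of these two bounds yields
$\|R_iQ_m^{*}Q_nR_j^{*}\|\lesssim 2^{-\varepsilon(|i-m|+|j-n|+|m-n|)/2}$.

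The decisive step, and also the main obstacle, is to convert this pointwise decay into genuine $|i-j|$-decay after summing over $m,n$. Writing $a=|i-m|$, $b=|j-n|$, $c=|m-n|$, the triangle inequality gives $a+c+b\ge |i-j|$, and the elementary rearrangement
$\tfrac12(a+b)+\tfrac12 c=\tfrac14(a+b)+\bigl[\tfrac14 a+\tfrac14 b+\tfrac12 c\bigr]\ge\tfrac14(a+b)+\tfrac14|i-j|$
then produces $\|R_iQ_m^{*}Q_nR_j^{*}\|\lesssim 2^{-\varepsilon|i-j|/4}\cdot 2^{-\varepsilon(|i-m|+|j-n|)/4}$. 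The geometric sum over $(m,n)$ is bounded, so $\|R_iR_j^{*}\|\lesssim 2^{-\varepsilon|i-j|/4}$, and the identical argument handles $\|R_i^{*}R_j\|$. Choosing $\alpha_k$ proportional to $2^{-\varepsilon|k|/8}$ then realizes the summable sequence required by Cotlar's lemma. The only technical delicacy lies in the above weight balance: the splitting must leave enough decay in the free indices $m,n$ for the double sum to converge absolutely while still extracting a genuine exponential factor in $|i-j|$.
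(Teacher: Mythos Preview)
Your proof is correct and follows the standard route to Carbery's variant of Cotlar's lemma: insert the resolution of identity twice, take the geometric mean of the two available bounds, and rebalance the exponent so that part of the decay survives the $(m,n)$-summation. Note, however, that the paper does not actually prove Lemma~\ref{Carbery}; it is simply quoted from \cite{carbery} and then invoked in the proof of Lemma~\ref{t1}, so there is no in-paper argument to compare against.

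One small technical point: taking adjoints of $\sum_m Q_m=I$ in the strong operator topology yields only \emph{weak} operator convergence of $\sum_m Q_m^*$, not strong convergence as you write. This does not damage the argument. Once you know the double sum $\sum_{m,n}R_iQ_m^*Q_nR_j^*$ converges absolutely in operator norm to some $T$, you can identify $T$ with $R_iR_j^*$ by testing against vectors: with $S_N=\sum_{|k|\le N}Q_k$ one has $\langle R_iS_M^*S_NR_j^*x,y\rangle=\langle S_NR_j^*x,S_MR_i^*y\rangle\to\langle R_j^*x,R_i^*y\rangle=\langle R_iR_j^*x,y\rangle$ by the SOT convergence in (i), while the same partial sums converge in norm to $T$. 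The analogous identification works for $R_i^*R_j$.
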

In the following, we give one more fact. Let $n\in\mathbb Z$ and $B\subset\R^{d}$ be a Euclidean ball. 
For any integer $k<n$ and $1\leq p\leq\infty$, we define a linear operator $M_{k,n}: L_p(\mathcal N)\rightarrow L_p(\mathcal N)$ as
\begin{align}\label{mkn}
M_{k,n}u(x)=\frac{1}{|B_k|}\int_{\mathcal{I}(x+B_k,n)}u(y)dy,
\end{align}
where
\begin{eqnarray}\label{C18}
	\mathcal{I}(B,n)=\bigcup_{{\begin{subarray}{c}
				Q\in\Q_{n} \\ \partial B\cap Q\neq \emptyset
	\end{subarray}}} B\cap Q.
\end{eqnarray}
The following property of $M_{k,n}$ is obtained in \cite[Lemma 3.8]{HX}.
\begin{Alemma}\label{kn}\rm
Let $M_{k,n}$ be the linear operator defined as (\ref{mkn}) with $k<n$. Then for $1\leq p\leq\infty$ and $u\in L_p(\mathcal N)$, 
$$\|M_{k,n}u\|_{p}\lesssim 2^{k-n}\|u\|_p.$$
%if moreover $h$ is positive, then
%$$\|M_{k,n}h\|_{p}\lesssim 2^{k-n}\|h_n\|_p$$
%where $h_n=\mathsf{E}_n(h)$.
\end{Alemma}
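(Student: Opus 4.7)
The plan is to reduce the claim to the two endpoint bounds $p=1$ and $p=\infty$ and then interpolate, with the geometric volume estimate on the boundary-layer set $\mathcal{I}(x+B_k,n)$ doing all the real work. Since $k<n$, this set is a union of dyadic cubes $Q\in\Q_n$ (each of volume $2^{-nd}$) meeting the sphere $\partial(x+B_k)$ of radius $2^{-k}$. A simple covering argument shows that the number of such cubes is controlled by the $(d-1)$-dimensional surface area of the sphere divided by $2^{-n(d-1)}$, i.e.\ by a multiple of $(2^{-k}/2^{-n})^{d-1}=2^{(n-k)(d-1)}$. Multiplying by the volume of a single $n$-cube gives the key geometric input
\[
|\mathcal{I}(x+B_k,n)|\ \lesssim\ 2^{(n-k)(d-1)}\cdot 2^{-nd}\ =\ 2^{-k(d-1)}\cdot 2^{-n}.
\]

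For the $L_\infty$ endpoint, applying the operator-valued triangle inequality inside the Bochner integral gives, pointwise in $x$,
\[
\|M_{k,n}u(x)\|_{\M}\ \le\ \frac{1}{|B_k|}\int_{\mathcal{I}(x+B_k,n)}\|u(y)\|_{\M}\,dy\ \le\ \frac{|\mathcal{I}(x+B_k,n)|}{|B_k|}\,\|u\|_\infty\ \lesssim\ 2^{k-n}\|u\|_\infty,
\]
since $|B_k|\approx 2^{-kd}$. Taking $\esssup$ in $x$ yields $\|M_{k,n}u\|_\infty\lesssim 2^{k-n}\|u\|_\infty$.

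For the $L_1$ endpoint, I would combine the $L_1(\M)$-valued triangle inequality with Fubini's theorem. Writing
\[
\|M_{k,n}u\|_1\ \le\ \frac{1}{|B_k|}\int_{\R^d}\int_{\R^d}\mathbf{1}_{\{y\in\mathcal{I}(x+B_k,n)\}}\,\|u(y)\|_{L_1(\M)}\,dy\,dx
\]
and swapping the order of integration reduces matters to bounding, for each fixed $y\in\R^d$, the Lebesgue measure of the set of centers $x$ for which $\partial(x+B_k)$ meets the unique cube $Q_{y,n}\in\Q_n$ containing $y$. Such $x$ lie in a thin spherical shell around $y$ of inner/outer radius $\approx 2^{-k}$ and width $\lesssim 2^{-n}$, whose measure is $\lesssim 2^{-k(d-1)}\cdot 2^{-n}$ by the same geometric estimate as above. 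Dividing by $|B_k|\approx 2^{-kd}$ produces the factor $2^{k-n}$ and yields $\|M_{k,n}u\|_1\lesssim 2^{k-n}\|u\|_1$.

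Finally, since $M_{k,n}$ is a positive linear operator, complex interpolation for noncommutative $L_p$-spaces between the two endpoint bounds delivers $\|M_{k,n}u\|_p\lesssim 2^{k-n}\|u\|_p$ uniformly for every $1\le p\le\infty$. The only genuinely delicate step is the geometric volume estimate on $\mathcal{I}(x+B_k,n)$; the operator-valued part of the argument is just the Minkowski/triangle inequality applied in the vector-valued integral, and the interpolation step is standard.
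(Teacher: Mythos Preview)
The paper does not actually prove this lemma here; it simply cites \cite[Lemma 3.8]{HX}. Your argument is correct and self-contained: the boundary-layer volume estimate $|\mathcal{I}(x+B_k,n)|\lesssim 2^{-k(d-1)}2^{-n}$ is exactly the geometric input the paper itself uses elsewhere (see the line $\int_{\R^d}\chi_{(x+\partial B_k)\cap Q\neq\emptyset}\,dx\lesssim 2^{-k(d-1)}2^{-n}$ in the proof of Lemma~\ref{bad estimate}), and your $L_1$ bound via Fubini is just the dual form of this same estimate. The endpoint-plus-interpolation route you take is the standard way to prove such a bound and is almost certainly how \cite{HX} proceeds as well, so there is no meaningful divergence in approach.
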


Now we are ready to prove Lemma \ref{t1}.
\begin{proof}[Proof of Lemma \ref{t1}.]
Without loss of generality,
we may assume that $h$ is positive. According to Lemma \ref{Carbery}, we will take $R_{k}=\nu_{k}(M_{k}-\mathsf{E}_{k})$. It is clear that $\|R_{k}\|_{\mathcal{B}(L_{2}(\mathcal{N}))}\lesssim 1$. On the other hand, let $Q_{n}=\Delta_{n}$, where $\Delta_{n}=\mathsf{E}_{n}-\mathsf{E}_{n-1}$ is the martingale difference. With this choice of $Q_{n}$, it can be readily seen that $Q_{n}$ satisfies assumptions (i) and (ii) in Lemma \ref{Carbery}. Therefore, all that remains is to check assumption (iii) in Lemma \ref{Carbery}.
%, then by the orthogonality of martingale differences and the fact that $\Delta_{n}$ is selfadjoint, 
%Thus to finish the proof of Lemma \ref{t1}, in the following we shall justify assumption (\ref{103}) stated in Lemma \ref{Carbery}.

We first prove
\begin{align}\label{23333}
\|R_{k}Q_{n}h\|^{2}_{2}\lesssim 2^{-|n-k|}\|Q_{n}h\|^{2}_{2}.
\end{align}
Recall that $Q_{n}h=dh_{n}$ and $\|(\nu_{k})\|_{\ell_{\infty}}\leq1$. Then
$$\|R_{k}Q_{n}h\|^{2}_{2}=\|\nu_{k}(M_{k}-\mathsf{E}_{k})dh_{n}\|^{2}_{2}
\leq\|(M_{k}-\mathsf{E}_{k})dh_{n}\|^{2}_{2}.$$
Therefore, it suffices to show
\begin{align}\label{3}
\|(M_{k}-\mathsf{E}_k)dh_{n} \|^{2}_{2}\lesssim 2^{-|n-k|}\|dh_{n} \|^{2}_{2}.
\end{align}
Consider (\ref{3}) in the case $k\geq n$. Note that in this case $\mathsf{E}_kdh_{n}=dh_{n}$. Thus it is enough to show
\begin{align}\label{4}
\|M_{k}dh_{n}-dh_n\|^{2}_{2}\lesssim 2^{n-k}\|dh_n\|^{2}_{2}.
\end{align}
To this end, we write
\begin{align*}
    \|M_{k}dh_n-dh_n\|^{2}_{2}
    =&\ \int_{\R^d}\|M_{k}dh_n(x)-dh_n(x)\|^{2}_{L_{2}(\M)}dx\\
    =&\ \sum_{H\in\Q_{n}}\int_H\|M_{k}dh_n(x)-dh_n(x)\|^{2}_{L_{2}(\M)}dx.
\end{align*}
Notice that $dh_{n}$ is a constant operator on $H\in\Q_{n}$. Then $M_{k}dh_n(x)-dh_n(x)=0$ if $x+ B_{k}\subset H$. Thus for $x\in H$, $(M_{k}dh_n-dh_n)(x)$ may be nonzero only if $x+B_{k}$ intersects with the complement of $H$---$H^c$. On the other hand, given a atom $H$ and a Euclidean ball $B$ in $\R^{d}$, if we define
$$\mathcal{H}(B,H)= \{x\in H|(x+B)\cap H^c\neq \emptyset\}.$$
then it is easy to verify that for a fixed $H\in\Q_{n}$
  $$|\mathcal{H} (B_{k},H)|\lesssim2^{(d-1)(-n)}\cdot 2^{-k}.$$
By these observations, we see that  
\begin{align}\label{10}
	\int_H \|M_{k}dh_n(x)-dh_n(x)\|^{2}_{L_{2}(\M)}dx\lesssim2^{(d-1)(-n)}\cdot2^{-k}\cdot m_H^{2}\lesssim2^{n-k} \int_Hm_H^{2}dx,
\end{align}  
where $m_H$ is the maximum of $\|dh_n(x)\|_{L_{2}(\M)}$ on $H$ and the cubes in $\Q_{n}$ neighboring $H$. Since $m_H$ is a constant on $H$, $\int_{\R^d}m^{2}_H \lesssim\int_{\R^d}\|dh_n(x)\|^{2}_{L_{2}(\M)}dx$. Finally, summing over all $H\in\Q_{n}$ in (\ref{10}), we obtain the desired estimate (\ref{4}).
%Let  Then $\|M_{k}dh_n(x)-dh_n(x)\|_{L_{2}(\M)} \leq 2m_H$ for every $x\in H$. Therefore

Now we estimate  (\ref{3}) in the case $n> k$. Note that $\mathsf{E}_kdh_n=0$ for $n> k$. Hence, it suffices to prove
\begin{align}\label{5}
\|M_{k}dh_n\|^{2}_2\lesssim2^{k-n}\|dh_n\|^{2}_{2}.
\end{align}
The cancellation property of $dh_n$ over atoms in $\Q_{n-1}$ gives $M_{k}dh_n=M_{k,n-1}dh_n$. Therefore, (\ref{5}) follows directly from Lemma \ref{kn}. %Therefore, we finish the argument of (\ref{23333}).

Next we consider
\begin{align}\label{233333}
\|R_{k}^{\ast}Q_{n}h\|^{2}_{2}=\|\nu_{k}(M_{k}^{\ast}-\mathsf{E}_{k})dh_{n}\|^{2}_{2}\lesssim 2^{-|n-k|}\|Q_{n}h\|^{2}_{2}.
\end{align}
Note that the adjoint operator $M^{*}_{k}$ of $M_{k}$, defined by
$$\langle M_{k}f,g\rangle=\langle f,M^{*}_{k}g\rangle$$
associated with the kernel that coincide with the function $|B_{k}|^{-1}\chi_{B_{k}}(-x)$. Since $B_{k}$ is symmetric, $M_{k}^{*}=M_{k}$. Hence, by  (\ref{23333}),
%$$\|(M^{*}_{k}-\mathsf{E}_k)dh_{n} \|^{2}_{2}=\|(M_{k}-\mathsf{E}_k)^{*}dh_{n} \|^{2}_{2}=\|(M_{k}-\mathsf{E}_k)dh_{n} \|^{2}_{2}.$$
$\|R_{k}^{\ast}Q_{n}\|_{\mathcal{B}(L_{2}(\mathcal{N}))}\lesssim 2^{-|n-k|}$. Therefore, $R_{k}$ satisfies the hypothesis of Lemma \ref{Carbery}.
Consequently, $\|T\|_{\mathcal{B}(L_{2}(\mathcal{N}))}=\|\sum_{k}R_{k}\|_{\mathcal{B}(L_{2}(\mathcal{N}))}\lesssim1$. This finishes the proof.
\end{proof}
\section{Appendix B. Some further problems}
In this section, we list some further problems. Inspired by the argument in \cite{Christ, Seeger}, we would like to seek a new proof of weak type $(1,1)$ estimate for the bad function based on the $L_{2}$-norm method.

% in this section, we present another proof of the 
%And at the end of this section we list a problem related to weak type $(1,1)$ estimate.%, since we believe that the solution this problem can promote the development of noncommutative Calder\'on-Zygmund theory.

Let $f \in \mathcal{N}_{c,+}$ and $\lambda>0$. If we set
\begin{equation}\label{czd2343}
\begin{array}{rclcrcl} \displaystyle b_d & = & \displaystyle \sum_{n\in\Z} p_n \hskip1pt (f - f_n) \hskip1pt p_n, & \quad &
b_{\mathit{off}} & = & \displaystyle \sum_{n\in\Z} p_n (f-f_{n}) q_n+q_n (f-f_{n}) p_n,
\end{array}
\end{equation}
then it is easy to show that $b=b_d+b_{\mathit{off}}$, where $b_d$ (resp. $b_{\mathit{off}}$) is called the diagonal (resp. off-diagonal) part of $b$. %We will show the following proposition by applying $L_{2}$-method.
\begin{Bproposition}\rm
The following estimate holds.
\begin{align}\label{HLX7787}
\varphi(\chi_{(\lambda,\infty)}(|Tb_{d}|))\lesssim\frac{\| f\|_{1}}{\lambda}.
\end{align}
\end{Bproposition}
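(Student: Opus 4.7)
The plan is to adapt the Christ--Seeger $L_{2}$-norm method to the noncommutative framework developed in Section~2, thereby bypassing the $L_{1}$-type bound of Lemma~\ref{bad estimate}. As a first step I would reproduce the opening reduction of Proposition~\ref{bad function}: decomposing $Tb_{d}$ into its four $\zeta$-corners, invoking the distributional inequality~(\ref{distri}) and the bound $\varphi(\mathbf{1}_{\mathcal{N}}-\zeta)\lesssim\|f\|_{1}/\lambda$ from Theorem~\ref{czdecom}(i), the estimate~(\ref{HLX7787}) follows from Chebyshev's inequality once we establish the $L_{2}$-bound
\begin{equation*}
\|\zeta\, T b_{d}\, \zeta\|_{2}^{2}\;\lesssim\;\lambda\,\|f\|_{1}.
\end{equation*}

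Next I would collapse the double sum defining $\zeta T b_{d}\zeta$. Two cancellations are available. Since $p_{n}\in\mathcal{N}_{n}$, we have $\mathsf{E}_{n}b_{d,n}=p_{n}(f_{n}-f_{n})p_{n}=0$, whence $\mathsf{E}_{k}b_{d,n}=0$ for every $k\le n$. For the complementary regime $k\ge n$, the support property built into $\zeta$ applies: if $y\in x+B_{k}$ and $k\ge n$, then $|y-x|_{\infty}\le 2^{-n}$, so $x\in 5Q_{y,n}$ and hence $\zeta(x)p_{n}(y)=\zeta(x)p_{Q_{y,n}}=0$; the same reasoning treats $\mathsf{E}_{k}b_{d,n}$ for $k\ge n$. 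Combining these facts,
\begin{equation*}
\zeta\, T b_{d}\, \zeta\;=\;\sum_{n}\sum_{k<n}\nu_{k}\,\zeta\, M_{k}b_{d,n}\,\zeta.
\end{equation*}

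The core $L_{2}$-estimate then proceeds in two stages. First, since $\int_{Q}b_{d,n}\,dx=0$ for every $Q\in\Q_{n}$, I would identify $M_{k}b_{d,n}$ with the boundary averaging $M_{k,n}b_{d,n}$ of (\ref{mkn})--(\ref{C18}); Lemma~\ref{kn} with $p=2$ then gives the geometric decay $\|M_{k}b_{d,n}\|_{2}\lesssim 2^{k-n}\|b_{d,n}\|_{2}$, and summation over $k<n$ produces $\|U_{n}\|_{2}\lesssim\|b_{d,n}\|_{2}$ where $U_{n}:=\sum_{k<n}\nu_{k}M_{k}b_{d,n}$. Second, expanding
\begin{equation*}
\Big\|\sum_{n}\zeta U_{n}\zeta\Big\|_{2}^{2}=\sum_{n,m}\varphi\bigl((\zeta U_{n}\zeta)^{*}(\zeta U_{m}\zeta)\bigr),
\end{equation*}
I would establish a Cotlar--Stein style almost orthogonality on the cross terms, extracting off-diagonal decay from the mutual orthogonality of $(p_{n})_{n}$ and the scale factor $2^{k-n}$ inherited from Lemma~\ref{kn}. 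This reduces the problem to the diagonal sum $\sum_{n}\|b_{d,n}\|_{2}^{2}$.

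The decisive step, and the main obstacle, is therefore
\begin{equation*}
\sum_{n}\|b_{d,n}\|_{2}^{2}\;\lesssim\;\lambda\,\|f\|_{1}.
\end{equation*}
Splitting $\|b_{d,n}\|_{2}^{2}\le 2\|p_{n}fp_{n}\|_{2}^{2}+2\|p_{n}f_{n}p_{n}\|_{2}^{2}$, the second summand is controlled by $\|p_{n}f_{n}p_{n}\|_{\infty}\|p_{n}f_{n}p_{n}\|_{1}\le 2^{d}\lambda\,\varphi(fp_{n})$ via (\ref{czd1123}) and the trace-preserving identity $\tau(p_{n}f_{n}p_{n})=\varphi(fp_{n})$; summation over $n$ using Lemma~\ref{cucu}(iii) produces $\lambda\|f\|_{1}$. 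The genuine difficulty is the first summand: $p_{n}fp_{n}$ does not inherit the universal $L_{\infty}$-control enjoyed by $p_{n}f_{n}p_{n}$, so plain H\"older interpolation fails. I expect one must proceed either by a further dyadic truncation of $f$ around the level $\lambda$ (a Gundy-type refinement of Cuculescu's construction) or by a cube-by-cube factorization $p_{Q}fp_{Q}=p_{Q}f^{1/2}\cdot f^{1/2}p_{Q}$ combined with Lemma~\ref{mainlemma} and the Cuculescu constraint $q_{\widehat{Q}}f_{Q}q_{\widehat{Q}}\le\lambda$; ensuring that the almost orthogonality of the previous step remains intact under such a refinement is the subtle part of the argument.
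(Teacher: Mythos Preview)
Your reduction to $\sum_{n}\|b_{d,n}\|_{2}^{2}\lesssim\lambda\|f\|_{1}$ is a genuine dead end, not merely a technical obstacle: this inequality fails already in the commutative case $\M=\C$. There $b_{d,n}=(f-f_{n})\chi_{E_{n}}$ with $E_{n}$ the union of maximal stopping cubes at generation $n$, and a single tall spike $f=N\chi_{[0,1/N]}$ with $\lambda\approx 1$ gives $\sum_{n}\|b_{d,n}\|_{2}^{2}\approx N$ while $\lambda\|f\|_{1}\approx 1$. Neither a Gundy-type truncation nor the factorisation $p_{Q}f^{1/2}\cdot f^{1/2}p_{Q}$ will rescue this: the issue is that $p_{n}fp_{n}$ simply has no $L_{\infty}$ bound of order $\lambda$, and any scheme that isolates $\|b_{d,n}\|_{2}$ before exploiting cancellation across $n$ is doomed. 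Your Cotlar--Stein step is also unsubstantiated, since after averaging by $M_{k}$ the projections $p_{n}(x)$ and $p_{m}(y)$ at \emph{distinct} points need not be orthogonal; only nested pairs $p_{Q}p_{Q'}=0$ for $Q'\subsetneq Q$ survive.

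The paper's $L_{2}$-method avoids both problems by two moves you are missing. First, it reindexes by the \emph{gap} $n\mapsto n$ (writing $\zeta Tb_{d}\zeta=\zeta\sum_{n\ge 1}\sum_{j}\nu_{j}M_{j}b_{d,n+j}\zeta$) and applies Minkowski in $n$ only; the target becomes $\|\sum_{j}M_{j}b_{d,n+j}\|_{2}^{2}\lesssim 2^{-n}\lambda\|f\|_{1}$, summable over~$n$. Second, and crucially, it uses \emph{positivity of the boundary averaging} to pass from $f$ to $f_{n+j}$ \emph{before} taking any norm: enlarging $\mathcal{I}(x+B_{j},n+j)$ to full cubes gives $M_{j,n+j}(p_{n+j}fp_{n+j})\le \widetilde{M}_{j,n+j}(p_{n+j}f_{n+j}p_{n+j})$, where $\widetilde{M}_{j,n+j}$ is convolution with $|B_{j}|^{-1}\chi_{I_{j,n+j}}$. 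This replaces the uncontrollable $p_{n+j}fp_{n+j}$ by $p_{n+j}f_{n+j}p_{n+j}\lesssim\lambda p_{n+j}$ at the operator level. One then expands the $L_{2}$-square of the (positive) sum $\sum_{j}\widetilde{M}_{j,n+j}(p_{n+j}f_{n+j}p_{n+j})$, and the cross terms are handled not by almost orthogonality but by the pointwise bound $\sum_{i}p_{n+i}(y)\le\mathbf{1}_{\mathcal{N}}$ inside the convolution integral, together with the kernel estimate $|I_{j,n+j}|\lesssim 2^{-n}|B_{j}|$. The positivity step is the idea you are lacking; without it no amount of almost orthogonality will close the argument.
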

In this section, we intend to give another proof by applying the $L_{2}$-norm method.
By similar reasoning, we are reduced to showing
\begin{align}\label{bd334}
\varphi \big(\chi_{(\lambda/4,\infty)}(|\zeta Tb_{d}\zeta|)\big)\lesssim\frac{ \|f\|_{1}}{\lambda},
\end{align}
where $\zeta$ was defined in (\ref{czd9}).
%Now we are at a position to show \eqref{bd334}.
\begin{proof}[Proof of estimate \eqref{bd334}.]
Recall that since we can assume that 	$m_{\lambda}(f)=0$, $p_n=0$ for all $n\leq 0$.  Thus $b_{d}=\sum_{n=1}^{\infty}b_{d,n}$, where $b_{d,n}=p_n (f-f_{n}) p_n$ as in (\ref{czd2343}). By the property of $\zeta$ (see e.g. \cite[Lemma 2.1]{CCP}) and 
using the same argument as in (\ref{diff}), we  have
%$$\zeta Tb_{d}\zeta=\zeta\sum_{n=n_d}^{\infty}\sum_{j\geq0}\nu_{j}M_{j}b_{d,n+j}\zeta.$$
\begin{align*}
	\zeta Tb_{d}\zeta
	=&\ \zeta\sum_{n=1}^{\infty}\sum_{k:k<n}\nu_{k}M_{k}b_{d,n}\zeta=\zeta\sum_{n=1}^{\infty}\sum_{j=1}^{\infty}\nu_{n-j}M_{n-j}b_{d,n}\zeta\\
	=&\ \zeta\sum_{n=1}^{\infty}\sum_{j=1-n}^{\infty}\nu_{j}M_{j}b_{d,n+j}\zeta=\zeta\sum_{n=1}^{\infty}\sum_{j\in\Z}\nu_{j}M_{j}b_{d,n+j}\zeta,
\end{align*}
where the third equation is based on the Fubini theorem and the last equation follows from the fact that $p_n=0$ for all $n\leq 0$.

For convenience, in the following, we may assume $\nu_{j}=1$ for all $j$ since $(\nu_{j})$ is just a bounded sequence. Then by the Chebychev and Minkowski inequalities, we find that
%The  implies
%$$\varphi \big(\chi_{(\lambda/4,\infty)}(|\zeta Tb_{d}\zeta|)\big)\leq\frac{\|\zeta\sum_{n=n_d}^{\infty}\sum_{j\geq0}\nu_{j}M_{j}b_{d,n+j}\zeta\|^{2}_{2}}{\lambda^{2}}.$$
%Without loss of generality, we can assume that . Then by the Minkowski inequality and the fact that $\zeta$ is a projection,
$$\varphi \big(\chi_{(\lambda/4,\infty)}(|\zeta Tb_{d}\zeta|)\big)\lesssim\frac{\big(\sum_{n=1}^{\infty}\|\sum_{j\in\Z}M_{j}b_{d,n+j}\|_{2}\big)^{2}}{\lambda^{2}}.$$
%since $\zeta$ is a projection in $\mathcal{N}$.
Thus it is enough to show
\begin{equation}\label{czd23443}
\|\sum_{j\in\Z}M_{j}b_{d,n+j}\|_{2}^{2}\lesssim2^{-n}\lambda\|f\|_{1}.
\end{equation}
Notice that for each $j\in\Z$, $\mathsf{E}_{n+j}b_{d,n+j}=0$. Therefore,
\begin{equation*}\label{czd234443}
M_{j}b_{d,n+j}=M_{j,n+j}b_{d,n+j},
\end{equation*}
where $M_{j,n+j}$ was defined in (\ref{mkn}). Hence,
$$\big\|\sum_{j\in\Z}M_{j}b_{d,n+j}\big\|_{2}^{2}=\big\|\sum_{j\in\Z}M_{j,n+j}b_{d,n+j}\big\|_{2}^{2}.$$
Then we use the Minkowski inequality and the definition of $b_{d,n+j}$ to get
\begin{align*}
    \big\|\sum_{j\in\Z}M_{j,n+j}b_{d,n+j}\big\|_{2}^{2}
    \lesssim&\ \big\|\sum_{j\in\Z}M_{j,n+j}p_{n+j}f_{n+j}p_{n+j}\big\|_{2}^{2}+\big\|\sum_{j\in\Z}M_{j,n+j}p_{n+j}fp_{n+j}\big\|_{2}^{2}\\
    \triangleq&\ \|F_{1,n}\|_{2}^{2}+\|F_{2,n}\|_{2}^{2}.
\end{align*}
Since $M_{j,n+j}p_{n+j}f_{n+j}p_{n+j}$ is a positive operator in $\mathcal{N}$,
\begin{align*}
	M_{j,n+j}p_{n+j}f_{n+j}p_{n+j}(x)
	\leq&\ \frac{1}{|B_j|}\sum_{{\begin{subarray}{c}
				Q\in\Q_{n+j} \\ (x+\partial B_j)\cap Q\neq \emptyset
	\end{subarray}}} \int_{Q}p_{n+j}f_{n+j}p_{n+j}(y)dy\\
	\leq&\ \widetilde{M}_{j,n+j}p_{n+j}f_{n+j}p_{n+j}(x)
\end{align*}
where $\widetilde{M}_{j,n+j}$ is defined by
\begin{equation}\label{czd234443}
	\widetilde{M}_{j,n+j}p_{n+j}f_{n+j}p_{n+j}=\frac{1}{|B_{j}|}\chi_{I_{j,n+j}}\ast p_{n+j}f_{n+j}p_{n+j}.
\end{equation}
and
$$I_{j,n+j}\triangleq\partial B_j+\sqrt{d}B_{n+j}.$$
Moreover, a simple calculation shows that $|I_{j,n+j}|\lesssim2^{-j(d-1)}2^{-n-j}$.

%If we set
%\begin{eqnarray*}\label{C183}
%\mathcal{I}_{1}(B,n)=\bigcup_{{\begin{subarray}{c}
%Q\in\Q_{n} \\ \partial B\cap Q\neq \emptyset
%\end{subarray}}} Q,
%\end{eqnarray*}
%then it is obvious to see

%$$\widetilde{M}_{j}p_{n+j}f_{n+j}p_{n+j}(x)=\frac{1}{|B_{j}|}\chi_{\mathcal{I}_{1}(B_{j},n+j)}\ast p_{n+j}f_{n+j}p_{n+j}(x).$$
Similarly, by noting $\mathsf{E}_{n+j}p_{n+j}fp_{n+j}=p_{n+j}f_{n+j}p_{n+j}$, we conclude that
$$M_{j,n+j}p_{n+j}fp_{n+j}\leq\widetilde{M}_{j,n+j}p_{n+j}f_{n+j}p_{n+j}.$$
As a consequence,
$$\max\Big\{\|F_{1,n}\|_{2}^{2},\|F_{2,n}\|_{2}^{2}\Big\}\leq\big\|\sum_{j\in\Z}\widetilde{M}_{j,n+j}p_{n+j}f_{n+j}p_{n+j}\big\|_{2}^{2}.$$
Therefore, it suffices to show
$$\big\|\sum_{j\in\Z}\widetilde{M}_{j,n+j}p_{n+j}f_{n+j}p_{n+j}\big\|_{2}^{2}\lesssim2^{-n}\lambda\|f\|_{1}.$$
%As usual, we adopt the $TT^{*}$ method in the $L_{2}$ estimate.
Note that $\widetilde{M}_{j,n+j}p_{n+j}f_{n+j}p_{n+j}$ is still a positive operator in $\mathcal{N}$. Then by the definition of $I_{j,n+j}$,
\begin{align*}
&\quad\big\|\sum_{j\in\Z}\widetilde{M}_{j,n+j}p_{n+j}f_{n+j}p_{n+j}\big\|_{2}^{2}\\
 & =\varphi\Big(\big(\sum_{j\in\Z}\widetilde{M}_{j,n+j}p_{n+j}f_{n+j}p_{n+j}\big)
    \big(\sum_{i\in\Z}\widetilde{M}_{i,n+i}p_{n+i}f_{n+i}p_{n+i}\big)\Big)\\& \lesssim\sum_{j\in\Z}\sum_{i:i\geq j}\varphi\Big(\big(\widetilde{M}_{j,n+j}p_{n+j}f_{n+j}p_{n+j}\big)
    \big(\widetilde{M}_{i,n+i}p_{n+i}f_{n+i}p_{n+i}\big)\Big)\\
 & =\sum_{j\in\Z}\sum_{i:i\geq j}\varphi\Big(\widetilde{M}_{j,n+j}\circ \widetilde{M}_{i,n+i}(p_{n+i}f_{n+i}p_{n+i})(p_{n+j}f_{n+j}p_{n+j})\Big)\\
 & \lesssim\sum_{j\in\Z}\sum_{i:i\geq j}\lambda\varphi\Big(\widetilde{M}_{j,n+j}\circ \widetilde{M}_{i,n+i}(p_{n+i})(p_{n+j}f_{n+j}p_{n+j})\Big),
\end{align*}
%where $\overline{\widetilde{M}}_{j,n+j}$ associated with the kernel $|B_j|^{-1}\overline{\chi}_{I_{j,n+j}}(x)\triangleq|B_j|^{-1}\chi_{I_{j,n+j}}(-x)$ and 
where the last inequality follows from the fact that $p_{n+i}f_{n+i}p_{n+i}\lesssim\lambda p_{n+i}$.% Then $\big\|\sum_{j\geq0}\widetilde{M}_{j,n+j}p_{n+j}f_{n+j}p_{n+j}\big\|_{2}^{2}$. %can be controlled by a constant multiple of
%$$\sum_{j\geq0}\sum_{i:i\geq j}\lambda\varphi\Big(\overline{\widetilde{M}}_{j,n+j}\circ \widetilde{M}_{i,n+i}(p_{n+i})(p_{n+j}f_{n+j}p_{n+j})\Big).$$
%where $\widetilde{\chi}_{\mathcal{I}_{1}(B_{j},n+j)}(x)=\chi_{\mathcal{I}_{1}(B_{j},n+j)}(-x)$.

In the following, we claim that to complete the argument, it suffices to show
\begin{equation}\label{czd2323234443}
\sum_{i:i\geq j}\widetilde{M}_{j,n+j}\circ \widetilde{M}_{i,n+i}(p_{n+i})\lesssim2^{-n}.
\end{equation}
Indeed, by (\ref{czd2323234443}), we get
$$\big\|\sum_{j\in\Z}\widetilde{M}_{j,n+j}p_{n+j}f_{n+j}p_{n+j}\big\|_{2}^{2}\lesssim\sum_{j\in\Z}\lambda2^{-n}\varphi\big( (p_{n+j}f_{n+j}p_{n+j})\big)\leq2^{-n}\lambda\|f\|_{1},$$
%\begin{align*}
%&\quad\big\|\sum_{j\geq0}\widetilde{M}_{j,n+j}p_{n+j}f_{n+j}p_{n+j}\big\|_{2}^{2}\\
%& \lesssim\sum_{j\geq0}\lambda2^{-2n}\varphi\big( (p_{n+j}f_{n+j}p_{n+j})\big)\leq2^{-2n}\lambda\|f\|_{1},
%\end{align*}
which is exactly the desired estimate.

To show (\ref{czd2323234443}),
%\begin{align*}
 %   \|F_{1,n}\|_{2}^{2}
%    =&\ \int_{\R^{d}}\tau\Big(\big(\sum_{j}M_{j,n+j}p_{n+j}f_{n+j}p_{n+j}(x)\big)
 %   \big(\sum_{i}M_{i,n+i}p_{n+i}f_{n+i}p_{n+i}(x)\big)\Big)dx\\
%    \lesssim&\ \sum_{j}\sum_{i:i\geq j}\int_{\R^{d}}\tau\Big(\big(M_{j,n+j}p_{n+j}f_{n+j}p_{n+j}(x)\big)
 %   \big(M_{i,n+i}p_{n+i}f_{n+i}p_{n+i}(x)\big)\Big)dx.
%\end{align*}
%We now apply (\ref{czd234443}) to obtain that
we see that for any $i\geq j$, $\chi_{I_{j,n+j}}\ast \chi_{I_{i,n+i}}$ is supported in
\begin{equation}\label{czd2344433}
\overline{I}_{j,n+j}\triangleq I_{j,n+j}+I_{j,n+j},
\end{equation}
and $|\overline{I}_{j,n+j}|\approx2^{-jd}$.
Moreover, by the Young inequality,
\begin{equation}\label{czd23444323}
\|\chi_{I_{j,n+j}}\ast \chi_{I_{i,n+i}}\|_{\infty}\leq\|\chi_{I_{j,n+j}}\|_{\infty}
\|\chi_{I_{i,n+i}}\|_{1}\lesssim2^{-i(d-1)}2^{-i-n}.
\end{equation}
Putting these estimates together and using property (\ref{czd1123}), we finally conclude that for any fixed $x$,
\begin{align*}
    \sum_{i:i\geq j}\widetilde{M}_{j,n+j}\circ \widetilde{M}_{i,n+i}(p_{n+i})(x)
    \lesssim&\ \sum_{i:i\geq j}2^{id}2^{jd}2^{-i(d-1)}2^{-i-n}\int_{x+\overline{I}_{j,n+j}} p_{n+i}(y)dy\\
    =&\ 2^{jd}2^{-n}\int_{x+\overline{I}_{j,n+j}}\sum_{i:i\geq j} p_{n+i}(y)dy\\
    \leq& 2^{jd}2^{-n}|x+\overline{I}_{j,n+j}|\approx2^{-n},
\end{align*}
where in the third inequality, we applied the fundamental fact $\sum_{j\in\Z}p_{j}\leq1_{\mathcal{N}}$.
This completes the proof.
\end{proof}
\begin{Bremark}\rm
However, with a moment's thought, we have no idea that how to use $L_{2}$-norm method to prove the weak type $(1,1)$ behavior related to the off-diagonal part $b_{\mathit{off}}$ defined in (\ref{czd2343}). To our knowledge, the $L_{2}$-norm method should be more powerful and it is expected to open the research on the singular integral operators of rough kernels \cite{Christ, Seeger} and other related problems in noncommutative Calder\'on-Zygmund theory.
\end{Bremark}
%\begin{Bremark}\rm
%The second problem is related to Theorem \ref{t5}. We set
%\begin{align}\label{123461}
%T_{n}f(x)=\sum_{k=1}^{n}\nu_{k}(M_{k}-\mathsf{E}_k)f(x),
%\end{align}
%with $(\nu_{k})\in\ell_{\infty}$. We wish to establish the noncommutative maximal inequalities for families of truncated operators $(T_{n})_{n}$. There are two obstacles. The first is that since the maximal function is not available any more in noncommutative setting (see \cite{JX} for more details), the maximal inequalities are much more difficult to get. The second difficulty lies in the fact that the kernel associated with $T_{n}$ does not enjoy any regularity.
%Therefore, the method applied in \cite{HLX} seems does not work anymore since the argument in \cite{HLX} depend heavily on a so-called $L_{q}$-integral regularity condition with $q\in(0,\infty)$ (see \cite{HLX} for more details).
%\end{Bremark}

\noindent {\bf Acknowledgements} \
I would like to thank Guixiang Hong for many valuable discussions,
his guidance throughout the making of this paper,
\'{E}ric Ricard and L\'{e}onard Cadilhac for communicating to us the present version of Calder\'on-Zygmund decomposition-Theorem \ref{czdecom}, Dr. Liang Wang for the useful discussion. In particular, I thank L\'{e}onard Cadilhac for providing us a note with a new proof of the weak type $(1,1)$ estimate of Calder\'on-Zygmund operator \cite{C2}.
I am also very grateful to the referees for their very careful reading and valuable comments. 

This work was supported by Natural Science Foundation of China Grant 12071355, the Basic Science Research Program through the National Research Foundation of Korea (NRF) Grant NRF-2017R1E1A1A03070510 and the Samsung Science and Technology Foundation under Project Number SSTF-BA2002-01.

\end{document}